\documentclass[11pt]{article}
\usepackage[margin=3cm]{geometry}
\usepackage{booktabs}
\usepackage{longtable}
\usepackage[T1]{fontenc}
\usepackage{lmodern}
\usepackage[numbers,sort&compress]{natbib}
\usepackage{algorithm,setspace}
\usepackage{algpseudocode}
\usepackage{amsmath, amssymb, amsthm}
\usepackage{caption,subcaption}
\usepackage{mathrsfs}
\usepackage{stmaryrd}
\usepackage{color, graphicx, float}
\usepackage[breakable]{tcolorbox}
\usepackage{tikz}
\usetikzlibrary{positioning, quotes}

\newlength{\R}\setlength{\R}{2.7cm}

\usepackage{etoolbox}
\makeatletter
\patchcmd{\@maketitle}{\LARGE \@title}{\LARGE\bfseries\@title}{}{}

\renewcommand{\@seccntformat}[1]{\csname the#1\endcsname.\quad}
\makeatother

\usepackage{hyperref}
\hypersetup{
    colorlinks=true,
    linkcolor=blue,   
    citecolor=red,      
    urlcolor=blue        
}

\makeatletter
\def\th@plain{%
	\thm@notefont{}
	\itshape 
}
\def\th@definition{%
	\thm@notefont{}
	\normalfont 
}

\renewenvironment{proof}[1][\proofname]{\par
	\normalfont
	\topsep0\p@\@plus3\p@ \trivlist
	\item[\hskip\labelsep\itshape
	#1\@addpunct{.}]\ignorespaces
}{%
	\qed\endtrivlist
}
\makeatother

\newtheorem{theorem}{Theorem}[section]
\newtheorem{lemma}[theorem]{Lemma}

\theoremstyle{definition}

\theoremstyle{definition}
\newtheorem{example}[theorem]{Example}
\theoremstyle{definition}
\newtheorem{remark}[theorem]{Remark}
\theoremstyle{definition}

\usepackage[shortlabels]{enumitem}

\parskip    6pt
\tolerance  3000

\allowdisplaybreaks

\makeatletter
\setlength{\footnotesep}{0.8\baselineskip} 
\skip\footins=8pt plus 2pt minus 2pt        
\makeatother

\begin{document}
\title{On Generalized Forward-Reflected-Backward Method for Monotone Inclusion Problems}

\author{
Santanu Soe$^{1,2}$\!,
\;V.~Vetrivel$^{1}$\!,
\;Jen-Chih Yao$^{3}$
}
\date{\today}

\makeatletter
\renewcommand{\thefootnote}{\arabic{footnote}}
\makeatother

\maketitle

\begingroup
\setlength{\parskip}{0pt} 

\footnotetext[1]{\raggedright
\textit{Department of Mathematics, Indian Institute of Technology Madras, Chennai 600036, Tamil Nadu, India.}
Email (Santanu Soe): \href{mailto:ma22d002@smail.iitm.ac.in}{ma22d002@smail.iitm.ac.in};\;
Email (V.~Vetrivel): \href{mailto:vetri@iitm.ac.in}{vetri@iitm.ac.in}.}

\footnotetext[2]{\raggedright
\textit{School of Mathematics and Statistics, The University of Melbourne, Parkville, VIC 3010, Australia.}
Email (Santanu Soe): \href{mailto:santanu.soe@student.unimelb.edu.au}{santanu.soe@student.unimelb.edu.au}.}

\footnotetext[3]{\raggedright
\textit{Center for General Education, China Medical University, Taichung 40402, Taiwan.}
Email (Jen-Chih Yao): \href{mailto:yaojc@mail.cmu.edu.tw}{yaojc@mail.cmu.edu.tw}.}
\endgroup

\setcounter{footnote}{0}
\renewcommand{\thefootnote}{\arabic{footnote}}

\maketitle

\begin{abstract}
We study the generalized forward–reflected–backward (GFRB) method, an extension of the forward–reflected–backward (FRB) scheme due to Malitsky and Tam, for solving monotone inclusion problems in real Hilbert spaces. We first analyze GFRB equipped with a non-decreasing step-size rule that does not require prior knowledge of the Lipschitz constant of the operator involved. We then present two illustrative examples: in the first, we show that the convergence rate of GFRB is bounded from below by that of FRB, and in the second, we obtain an improved convergence rate for GFRB via an appropriate choice of initial parameters. In the sequel, we propose an extended primal–dual twice-reflected (PDTR) algorithm and show that it can be recovered from GFRB under suitable metric selections. Finally, we validate the proposed approach on several state-of-the-art problems and demonstrate better numerical performance compared to the existing ones. 
\end{abstract}

\noindent{\bfseries Keywords:}
Monotone Inclusion Problems $\cdot$ Forward-Reflected-Backward Method $\cdot$ Operator Splitting $\cdot$ Rate of Convergence $\cdot$ Primal-Dual Algorithms

\noindent{\bf MSC 2020:}
47J25,  
47H05,
49M29,
65J15,
90C25.

\section{Introduction}
Let $\mathbb{H}$ be a real Hilbert space. Let us consider the following monotone inclusion problem of finding $x\in\mathbb{H}$ such that
\begin{equation}\label{main}
    0\in (A+B)(x),
\end{equation}
where $A: \mathbb{H}\rightrightarrows\mathbb{H}$ is a maximal monotone operator and $B: \mathbb{H} \to\mathbb{H}$ is a monotone, $L$-Lipschitz continuous operator. Throughout this paper, we assume that the solution set of \eqref{main} is nonempty, i.e., $(A+B)^{-1}(0)\neq\varnothing$. The inclusion problem \eqref{main} encompasses numerous classes of problems, such as monotone variational inequalities, saddle point problems, equilibrium problems, machine learning problems, and so on; see, e.g., \cite{malitsky2023first,bauschke2011convex,briceno2011monotone+,briceno2018forward,bui2022multivariate,malitsky2015projected,malitsky2020forward}. One of the best-known methods for solving~\eqref{main}, under the assumption that $A$ is maximally monotone and $B$ is monotone and $\beta$-cocoercive, is the \emph{forward--backward} (FB) method proposed by Lions and Mercier~\cite{lions1979splitting}. Given $x_0\in\mathbb{H}$, the iteration scheme of FB is given by 
\begin{equation}\label{FB}
     x_{k+1}= J_{\lambda A}(x_k-\lambda B(x_k)),
\end{equation}
where $\lambda>0$ and $J_{\lambda A} := (I+\lambda A)^{-1}$ denotes the resolvent of $A$. Under the cocoercivity assumption on $B$, the sequence generated by~\eqref{FB} converges weakly to a solution of~\eqref{main}, provided that the step-size $\lambda$ satisfies $0 < \lambda < \frac{2}{\beta}$. Since cocoercivity is a stronger notion than Lipschitzness, if one merely assumes that $B$ is monotone and Lipschitz continuous, the forward-backward method might fail to converge, see \cite[Example 12.1.3]{facchinei2003finite} for a counterexample. In order to resolve this issue, Tseng \cite{tseng2000modified} proposed a variant of FB, known as the \emph{forward-backward-forward} (FBF) method.  Given $x_0\in\mathbb{H}$, the FBF method iterates as
\begin{align}\label{FBF}
    \begin{cases}
        y_k = J_{\lambda A}(x_k-\lambda B(x_k)),\\
        x_{k+1}= y_k-\lambda B(y_k) + \lambda B(x_k).
    \end{cases}
\end{align}
Under the assumption that $A$ is maximally monotone and $B$ is monotone and $L$-Lipschitz, Tseng's FBF method \eqref{FBF} converges weakly to a solution of \eqref{main} provided the step-size $\lambda$ satisfies $0<\lambda<\frac{1}{L}$. Although Tseng's FBF method comes with the advantage of relaxation on the assumptions on $B$, it requires two forward evaluations of $B$, which can be computationally expensive when $B$ has a complex structure. This limitation was addressed by Malitsky and Tam \cite{malitsky2020forward}, who introduced the
\emph{forward--reflected--backward} (FRB) method. Starting from initial points
$x_0,x_{-1}\in\mathbb{H}$, the FRB iteration reads
\begin{equation}\label{FRB}
x_{k+1} = J_{\lambda A}\left(x_k-2\lambda B(x_k)+2\lambda B(x_{k-1})\right),
\end{equation}
where $\lambda>0$ is a given step-size. Under the same assumptions on $A$ and $B$ as in Tseng's FBF \eqref{FBF}, the sequence generated by \eqref{FRB} converges weakly to a solution of \eqref{main}, provided the step-size $\lambda$ satisfies $0<\lambda<\frac{1}{2L}$. Recently, the FRB method~\eqref{FRB} was generalized by stressing the reflection term $B(x_k) - B(x_{k-1})$ and incorporating an additional reflection term from a previous iterate, 
$B(x_{k-1}) - B(x_{k-2})$. 
This variant, called the \emph{generalized forward--reflected--backward} (GFRB)~\cite{dung2025generalization} algorithm, preserves all the properties of FRB \eqref{FRB} and has the same computational cost; namely, one evaluation of $B$ and one resolvent computation of $A$ per iteration. Given initial points $x_0, x_{-1}, x_{-2} \in \mathbb{H}$, $\delta\in\mathbb{R}$ and $\alpha\in[0,1)$, the iteration of GFRB is as follows
\begin{equation}\label{GFRB}
    x_{k+1} = J_{\lambda A} \left( (1-\alpha)x_k +\alpha x_{k-1} - \lambda(\delta+2) B(x_k) + \lambda(2\delta+1) B(x_{k-1}) - \lambda \delta B(x_{k-2}) \right).
\end{equation}
Under the same assumptions on $A$ and $B$ as in FRB and Tseng's FBF method, the sequence generated by \eqref{GFRB} converges weakly to a solution of \eqref{main} provided the step-size $\lambda$ satisfies $0<\lambda<\frac{1-\alpha}{2L(1+|\delta|)}$. Note that plugging $\delta=\alpha=0$ in \eqref{GFRB} simplifies both the algorithm and the corresponding step-size condition to FRB \eqref{FRB}, which suggests the name of the algorithm.

There are several other methods that can efficiently solve \eqref{main}. Among them, recently, Cevher and Vu \cite{cevher2021reflected} proposed the \emph{reflected forward–backward} (RFB) method by extending the reflected projected gradient method of Malitsky \cite{malitsky2015projected} for variational inequalities to solve the monotone inclusion problem \eqref{main}. Given initial points $x_{-1},x_0\in\mathbb{H}$, RFB iterates as
\begin{align}\label{RFB}
    \begin{cases}
        y_k = 2x_k - x_{k-1},\\
        x_{k+1} = J_{\lambda A}\bigl(x_k - \lambda B(y_k)\bigr),
    \end{cases}
\end{align}
 Under the same assumptions on $A$ and $B$ as for FRB \eqref{FRB} and GFRB \eqref{GFRB}, the sequence $(x_k)$ generated by \eqref{RFB} converges weakly to a solution of \eqref{main}, provided the step-size $\lambda$ satisfies $0<\lambda<\frac{\sqrt{2}-1}{L}$. Several modified versions of RFB~\eqref{RFB} for solving \eqref{main} have also been proposed; see, e.g., \cite{izuchukwu2025application,shehu2024outer}.

Nonetheless, a recurring limitation of many forward--backward type schemes is that their implementable stepsize rules often rely on \emph{a priori} knowledge of the Lipschitz constant $L$ of the single-valued operator $B$. To overcome this requirement, several researchers have proposed techniques that eliminate the need for such prior knowledge. For instance, Tan et al.~\cite{tan2025perturbed} proposed a perturbed reflected forward--backward variant for \eqref{main} that avoids using $L$ explicitly. Malitsky and Tam~\cite{malitsky2020forward} introduced a linesearch for the FRB method~\eqref{FRB}, thereby removing the dependence on a the Lipschitz bound. More recently, Lu and Mei~\cite{lu2025primal} developed primal--dual extrapolation splitting methods for the monotone inclusion~\eqref{main} and derived corresponding iteration-complexity guarantees. While linesearch procedures are robust, they typically introduce practical overhead: one must choose initial trial parameters, evaluate operators multiple times per outer iteration, and run an inner loop until a termination condition is met, which often increases the computational cost.

Motivated by these concerns, Hieu et al.~\cite{hieu2021modified} proposed a \emph{nearly decreasing} stepsize rule for FRB~\eqref{FRB} that dispenses with prior knowledge of $L$. However, decreasing stepsizes may lead to slower practical performance, especially when the initial stepsize is chosen conservatively; subsequent iterates remain restricted by this small bound and may require many iterations to reach a prescribed accuracy. One way to mitigate this effect is to incorporate inertial terms, as in the momentum-based FRB variants studied by Yao et al.~\cite{yao2024forward}. Along a different direction, Hoai~\cite{hoai2025new} proposed a \emph{non-decreasing} stepsize rule for the monotone variational inclusion $0\in F(x)+\partial g(x)$, where $F:\mathbb{H}\to\mathbb{H}$ is monotone and $L$-Lipschitz and $g:\mathbb{H}\to\mathbb{R}\cup\{\infty\}$ is proper, convex, and lower semicontinuous; this setting reduces to \eqref{main} by taking $A=\partial g$ and $B=F$. Building on this idea, Yao et al.~\cite{yao2025forward} proposed a double-inertial forward--reflected--backward scheme in which the Lipschitz behavior of the cost operator is estimated on the fly via a non-decreasing stepsize sequence. A key benefit of non-decreasing stepsizes is their ability to recover from conservative initialization, as the steps grow, they can enter a stable and effective regime, thereby improving the overall efficiency and adaptability of the method.

The main contributions of this paper are summarized as follows:
\begin{itemize}
    \item Motivated by the above discussions on the advantages of non-decreasing step-sizes over decaying step-sizes, we establish weak convergence of the generalized forward–reflected–backward (GFRB) method \eqref{GFRB} under the step-size rule proposed by Hoai \cite{hoai2025new} for solving the monotone inclusion \eqref{main}. This step-size eliminates the need to compute the Lipschitz constant of $B$ to run the algorithm.
    \item We further investigate GFRB~\eqref{GFRB} in the special case where $A \equiv  0$ and $B = \begin{bmatrix} 0 & -1 \\[2pt] 1 & 0 \end{bmatrix}$ on $\mathbb{H} = \mathbb{R}^2$, 
    and establish that its convergence rate cannot exceed $\frac{1}{\sqrt{2}}$. Furthermore, for the case $A \equiv 0$ and $B = I$ on $\mathbb{H} = \mathbb{R}^n$, we demonstrate a better linear rate of convergence for GFRB than some of the existing methods, such as FBF and FRB.
    \item We derive a new extended version of the primal-dual twice reflected algorithm (PDTR) from the GFRB to solve a more general structured three-operator inclusion problem with proper choices of metric.
    \item We finally apply the proposed step-size strategy to several benchmark optimization problems and demonstrate its superiority over existing methods.
\end{itemize}

\section{Preliminaries}\label{sec:prelim}
Throughout, $(\mathbb{H},\langle\cdot,\cdot\rangle)$ denotes a real Hilbert space with the induced norm $\|x\|:=\sqrt{\langle x,x\rangle}$. 
We write $I$ for the identity on $\mathbb{H}$.
Given a set-valued operator $A:\mathbb{H}\rightrightarrows\mathbb{H}$, its graph and domain are defined as
$\operatorname{gra}A:=\{(x,u)\in\mathbb{H}\times\mathbb{H}: u\in Ax\}$ and $\operatorname{dom}A:=\{x: Ax\neq\varnothing\}$, respectively. The inverse of $A$ is $A^{-1}(u):=\{x:\,u\in Ax\}$, and the resolvent of $A$ with a parameter $\lambda>0$ is 
\[
J_{\lambda A}:=(I+\lambda A)^{-1}.
\]
When $A$ is maximally monotone, the resolvent $J_{\lambda A}$ of $A$ is single-valued. An operator $A:\mathbb{H}\rightrightarrows\mathbb{H}$ is \emph{monotone} if 
\[
\langle u-v,x-y\rangle\ge 0\quad\forall (x,u),(y,v)\in\operatorname{gra}A,
\]
and \emph{maximally monotone} if $\operatorname{gra}A$ has no proper monotone extension, that is, for any monotone operator $\bar A$ on $\mathbb{H}$, $\operatorname{gra}\bar A\subset\operatorname{gra}A$. An operator $B:\mathbb{H}\to\mathbb{H}$ is called \emph{$L$-Lipschitz} if 
$$
\|Bx-By\|\le L\|x-y\|~~\forall x,y\in\mathbb{H}.
$$
Next, we recall some useful lemmas that will be helpful for the convergence analysis of our proposed algorithm.
\begin{lemma}\label{eq:cosrule}
    Let $a,b,c\in\mathbb{H}$. Then, 
    \begin{equation}
2\langle a-b,c-a\rangle=\|b-c\|^2-\|a-c\|^2-\|a-b\|^2.
\end{equation}
\end{lemma}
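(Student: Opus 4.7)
The identity to prove is the standard three-point (polarization/cosine) rule, so my plan is a direct algebraic expansion rather than any structural argument. The cleanest route is to rewrite $2\langle a-b,c-a\rangle$ using the basic inner-product identity $2\langle u,v\rangle=\|u\|^2+\|v\|^2-\|u-v\|^2$ applied with $u=a-b$ and $v=-(a-c)$. This converts the left-hand side into a combination of $\|a-b\|^2$, $\|a-c\|^2$, and $\|(a-b)-(a-c)\|^2=\|c-b\|^2$, at which point the claimed identity is immediate.

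More concretely, I would carry out the following two steps. First, I would write
\[
2\langle a-b,c-a\rangle \;=\; -2\langle a-b,a-c\rangle,
\]
and then invoke the polarization identity to obtain
\[
-2\langle a-b,a-c\rangle \;=\; -\|a-b\|^2 - \|a-c\|^2 + \|(a-b)-(a-c)\|^2.
\]
Second, I would simplify $\|(a-b)-(a-c)\|^2 = \|c-b\|^2 = \|b-c\|^2$ and collect terms to recover exactly the right-hand side of the statement.

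As an alternative route, one could instead expand each of the three squared norms on the right-hand side via $\|u-v\|^2=\|u\|^2-2\langle u,v\rangle+\|v\|^2$, cancel the $\|a\|^2$, $\|b\|^2$, $\|c\|^2$ contributions, and regroup the inner products as $2\langle a,c\rangle-2\langle b,c\rangle-2\|a\|^2+2\langle a,b\rangle=2\langle a-b,c-a\rangle$. Either route is a short calculation in a few lines.

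There is no real obstacle here: the statement is a purely algebraic identity in a real inner-product space, requiring only bilinearity and symmetry of $\langle\cdot,\cdot\rangle$ together with the definition of the induced norm. The only point that warrants any care is a sign-tracking step, namely recognizing $c-a=-(a-c)$ and $\|c-b\|^2=\|b-c\|^2$, so that the minus signs in front of $\|a-c\|^2$ and $\|a-b\|^2$ on the right-hand side come out correctly.
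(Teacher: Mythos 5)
Your proof is correct: both routes you outline (applying the polarization identity $2\langle u,v\rangle=\|u\|^2+\|v\|^2-\|u-v\|^2$ with $u=a-b$, $v=-(a-c)$, or directly expanding the three squared norms and cancelling) are short, valid calculations that yield the identity. The paper states this lemma without proof, treating it as the standard three-point (cosine) rule, so your argument is exactly the elementary verification one would supply and there is no divergence in approach to comment on.
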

\begin{lemma} \cite{brezis1973operateurs}\label{lem:sum_max_monotone}
    If $A:\mathbb{H}\rightrightarrows\mathbb{H}$ is a maximal monotone operator and $B:\mathbb{H}\rightrightarrows\mathbb{H}$ is monotone and Lipschitz continuous, then $A + B$ is maximally monotone.
\end{lemma}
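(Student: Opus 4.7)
The plan is to invoke Minty's surjectivity criterion: a monotone operator $M$ on $\mathbb{H}$ is maximally monotone if and only if $\operatorname{ran}(I+\lambda M)=\mathbb{H}$ for some (equivalently, every) $\lambda>0$. Since $A+B$ is monotone as a pointwise sum of two monotone operators, it will suffice to exhibit one $\lambda>0$ for which $I+\lambda(A+B)$ is surjective.

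Fix $z\in\mathbb{H}$. The inclusion $z\in x+\lambda(A+B)(x)$ rearranges, using that $A$ is maximally monotone so that $J_{\lambda A}=(I+\lambda A)^{-1}$ is single-valued and defined on $\mathbb{H}$, to the fixed-point equation
\[
x = J_{\lambda A}\!\bigl(z-\lambda B(x)\bigr).
\]
Denote the right-hand side by $T_z(x)$. The resolvent $J_{\lambda A}$ is nonexpansive, and since $B$ is $L$-Lipschitz, the inner map $x\mapsto z-\lambda B(x)$ is $\lambda L$-Lipschitz. Therefore $T_z$ is $\lambda L$-Lipschitz on the whole space.

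Choosing any $\lambda\in(0,1/L)$ makes $T_z$ a strict contraction on the complete metric space $\mathbb{H}$, so the Banach fixed-point theorem produces a unique $x$ with $T_z(x)=x$, equivalently a solution of $z\in x+\lambda(A+B)(x)$. Letting $z$ range over $\mathbb{H}$ yields $\operatorname{ran}(I+\lambda(A+B))=\mathbb{H}$, and Minty's criterion then upgrades monotonicity of $A+B$ to maximal monotonicity.

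There is no substantial obstacle beyond identifying the correct fixed-point operator $T_z$ and balancing $\lambda$ against $L$; the quantitative heart of the proof is the single inequality $\lambda L<1$, which localizes the contraction constant strictly below one. Everything else (monotonicity of the sum, nonexpansiveness of $J_{\lambda A}$, and the Minty characterization) is a classical tool, and the argument recovers the original result of Br\'ezis without further technicalities.
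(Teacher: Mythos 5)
Your proof is correct. The paper does not actually prove this lemma---it is quoted from Br\'ezis \cite{brezis1973operateurs} without argument---so there is no in-paper proof to compare against; your Minty-plus-Banach-contraction argument is precisely the classical proof of this statement. Two points are worth making explicit: (i) the argument tacitly uses that $B$ is single-valued with $\dom B=\mathbb{H}$ (which is the paper's standing assumption, despite the set-valued arrow in the lemma's statement), since both the monotonicity of the pointwise sum and the well-definedness of $T_z(x)=J_{\lambda A}\bigl(z-\lambda B(x)\bigr)$ rely on it; (ii) the direction of Minty's criterion you invoke---a monotone $M$ with $\ran(I+\lambda M)=\mathbb{H}$ for one $\lambda>0$ is maximal---is the correct one, applied to $M=A+B$ after rescaling, so the quantitative condition $\lambda L<1$ only needs to hold for a single admissible $\lambda$. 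An alternative, equally short route would be to note that $B$, being monotone, continuous and everywhere defined, is itself maximally monotone, and then apply the sum theorem under the full-domain qualification $\dom B=\mathbb{H}$; your fixed-point argument has the advantage of being self-contained and of exhibiting the resolvent $J_{\lambda(A+B)}$ constructively.
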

\begin{lemma} (Opial) \cite{opial1967weak}\label{lem:Opial}
Let $(z_k)$ be a sequence in $\mathbb{H}$ and $S\subset\mathbb{H}$ is nonempty. If
\begin{enumerate}
    \item for every $s\in S$, the limit $\lim_{k\to\infty}\|z_k-s\|$ exists and
    \item every weak cluster point of $(z_k)$ belongs to $S$, 
\end{enumerate}
then $(z_k)$ converges weakly to a point in $S$.
\end{lemma}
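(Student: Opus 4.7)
The plan is to verify the two classical ingredients of Opial's argument: boundedness of $(z_k)$ with existence of weak cluster points, and uniqueness of such a cluster point. Once these are in hand, the weak convergence conclusion follows from a standard subsequence argument.

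First, I would observe that hypothesis (i) immediately yields boundedness of $(z_k)$. Indeed, fixing any $s\in S$ (which exists because $S$ is nonempty), the real sequence $\|z_k-s\|$ converges, hence is bounded, and so $(z_k)$ is bounded in $\mathbb{H}$. Since bounded sets in a real Hilbert space are weakly sequentially precompact, the set of weak cluster points of $(z_k)$ is nonempty, and by hypothesis (ii) every such cluster point lies in $S$.

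The main work, and what I would view as the heart of the proof, is the uniqueness of the weak cluster point. Suppose $z_{k_i}\rightharpoonup x$ and $z_{k_j}\rightharpoonup y$ along two subsequences, with $x,y\in S$ by (ii). The key identity is
\begin{equation*}
\|z_k-x\|^2-\|z_k-y\|^2 = -2\langle z_k,\,x-y\rangle + \|x\|^2-\|y\|^2.
\end{equation*}
By (i), the left-hand side has a limit as $k\to\infty$, so the scalar sequence $\langle z_k,\,x-y\rangle$ converges. Evaluating this limit along the two subsequences and using weak convergence gives $\langle x,x-y\rangle=\langle y,x-y\rangle$, i.e. $\|x-y\|^2=0$, so $x=y$.

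Finally, to conclude, I would argue by contradiction: if $(z_k)$ did not converge weakly to the unique cluster point $x^\star\in S$, there would exist a weakly open neighbourhood $U$ of $x^\star$ and a subsequence $(z_{k_\ell})$ outside $U$; by boundedness this subsequence has a further weakly convergent subsequence, whose limit must again equal $x^\star$ by the uniqueness established above, contradicting its avoidance of $U$. The only nontrivial step is the uniqueness argument, which relies crucially on the polarisation identity above and the fact that hypothesis (i) applies to \emph{every} $s\in S$ (so in particular to both candidate limits $x$ and $y$); the rest is topological bookkeeping in the weak topology.
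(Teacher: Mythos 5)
Your proof is correct and follows the classical argument for Opial's lemma: boundedness and existence of weak cluster points from (i), uniqueness of the cluster point via the identity $\|z_k-x\|^2-\|z_k-y\|^2=-2\langle z_k,x-y\rangle+\|x\|^2-\|y\|^2$ applied to two candidate limits in $S$, and a subsequence argument to conclude weak convergence. The paper itself states this lemma as a known result with a citation to Opial and gives no proof, so there is nothing to compare beyond noting that your argument is the standard one and is complete.
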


\section{GFRB with a non-decreasing step-size strategy}
In this section, we analyze GFRB \eqref{GFRB} with a non-decreasing step-size strategy proposed in~\cite{hoai2025new}. A key advantage of this strategy is that it dispenses with any
a priori estimate of the Lipschitz constant $L$ of $B$ while ensuring that the step-sizes do not decay to zero.

\begin{algorithm}[ht]
\caption{GFRB with non-decreasing step-size to solve \eqref{main}}
\label{alg:forb_incr}
\small
\begin{algorithmic}[1]
\State \textbf{Initialize:} Let $x_{-1},x_{0}, x_1\in \mathbb{H}$, $0<c_{1}<c_{2}<\frac{1-\varepsilon-\alpha}{2|\delta|+2}$, where $\varepsilon>0$, and $0\le \alpha<1$. Let
$\lambda_{-1},\lambda_{0}>0$, and $(\gamma_k)_{k\ge0}\subset\mathbb{R}_+$ is a sequence with $\sum_{k=0}^\infty \gamma_k<\infty$.
\For{$k=1,2,\dots$}
  \State \textbf{Step-size Update:}
  \Statex \hspace*{\algorithmicindent}
  \begin{equation}\label{eq:gamma_update}
    \lambda_{k}
    =\begin{cases}
      \dfrac{c_{1}\,\|x_{k-1}-x_{k}\|}{\|B x_{k-1}-B x_{k}\|},
        &\text{if }\|B x_{k-1}-B x_{k}\|>\tfrac{c_{2}}{\lambda_{k-1}}\|x_{k-1}-x_{k}\|,\\[2pt]
      \left(1+\gamma_{k-1}\right)\,\lambda_{k-1}, &\text{otherwise.}
    \end{cases}
  \end{equation}
\State \textbf{Compute:}
  \Statex \hspace*{\algorithmicindent}
  \begin{equation}\label{eq:x_k_update}
    x_{k+1}
    =J_{\lambda_{k}A}\!\left(
    \begin{aligned}
      &(1-\alpha)x_{k}+\alpha x_{k-1}-\lambda_{k}B(x_{k})\\
      &\quad-\lambda_{k-1}(1+\delta)\bigl(B(x_{k})-B(x_{k-1})\bigr)
      +\lambda_{k-2}\delta\bigl(B(x_{k-1})-B(x_{k-2})\bigr)
    \end{aligned}
    \right).
  \end{equation}
\EndFor
\end{algorithmic}
\end{algorithm}

In practice, we choose the value of $\varepsilon$ to be close to zero. Furthermore, note that if $\delta=\alpha=0$ in Algorithm \ref{alg:forb_incr}, then GFRB reduces to FRB and thus can also be seen as a new step-size strategy for the FRB algorithm.

\begin{lemma}\cite[Lemma 3.2]{hoai2025new}\label{Lemma_pham_Thi_Hoai}
Let $\{\lambda_k\}$ be the step-size sequence generated by Algorithm \ref{alg:forb_incr}. Then
\begin{enumerate}
    \item $\{\lambda_k\}$ is convergent and $\forall k \geq 1$, we have $\lambda_k \geq \lambda_{\min} = \min \left\{ \frac{c_1}{L}, \lambda_0 \right\} > 0$;
    \item there exists a positive integer $\tilde k$ such that $\lambda_{k+1} > \lambda_k$ $\forall k \geq \tilde k$.
\end{enumerate}
\end{lemma}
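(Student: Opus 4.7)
The plan is to establish both assertions via a case analysis of the update rule \eqref{eq:gamma_update}, combined with the summability of $\{\gamma_k\}$, which in particular guarantees that $\Pi_\infty := \prod_{i=0}^{\infty}(1+\gamma_i)$ is finite. I would use the Lipschitz property of $B$ only to handle the first branch.

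For the lower bound in Part (i), I would proceed by induction. The base case $\lambda_0 \geq \min\{c_1/L,\lambda_0\}$ is trivial. For the inductive step, if the first branch of \eqref{eq:gamma_update} is selected at step $k$, the $L$-Lipschitz continuity of $B$ gives directly $\lambda_k = c_1\|x_{k-1}-x_k\|/\|Bx_{k-1}-Bx_k\| \geq c_1/L$; if the second branch is selected, then $\lambda_k = (1+\gamma_{k-1})\lambda_{k-1}\geq \lambda_{k-1}$, so the bound propagates. For the convergence claim, I would first observe that in both branches one has $\lambda_k \leq (1+\gamma_{k-1})\lambda_{k-1}$: indeed, the triggering condition $\|Bx_{k-1}-Bx_k\| > (c_2/\lambda_{k-1})\|x_{k-1}-x_k\|$ of the first branch immediately yields $\lambda_k < (c_1/c_2)\lambda_{k-1} < \lambda_{k-1}$ since $c_1 < c_2$. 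Setting $\beta_k := \lambda_k / \prod_{i=0}^{k-1}(1+\gamma_i)$, the sequence $\{\beta_k\}$ is therefore non-increasing and bounded below by $\min\{c_1/L,\lambda_0\}/\Pi_\infty > 0$, hence convergent. Since the denominator $\prod_{i=0}^{k-1}(1+\gamma_i)$ also converges to $\Pi_\infty$, the sequence $\{\lambda_k\}$ converges.

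For Part (ii), I would argue by contradiction, assuming the first branch is triggered infinitely often at indices $k_1 < k_2 < \cdots$. Between two consecutive trigger events $k_j$ and $k_{j+1}$, only the second branch is active, so $\lambda_{k_{j+1}-1} = \lambda_{k_j}\prod_{i=k_j}^{k_{j+1}-2}(1+\gamma_i)$, and at the trigger $\lambda_{k_{j+1}} \leq (c_1/c_2)\lambda_{k_{j+1}-1}$. Combining these,
\[
\lambda_{k_{j+1}} \;\leq\; \frac{c_1}{c_2}\,\lambda_{k_j}\prod_{i=k_j}^{k_{j+1}-2}(1+\gamma_i).
\]
Since $\sum \gamma_i < \infty$, the tails of the convergent product tend to $1$, so for every $\varepsilon$ with $(c_1/c_2)(1+\varepsilon) < 1$ there is a $j_0$ beyond which $\prod_{i=k_j}^{k_{j+1}-2}(1+\gamma_i) \leq 1+\varepsilon$. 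Hence $\lambda_{k_{j+1}} \leq q\,\lambda_{k_j}$ with some $q<1$ for all $j\geq j_0$, forcing $\lambda_{k_j}\to 0$ and contradicting the uniform lower bound $\lambda_k\geq c_1/L>0$ from Part (i). Therefore the first branch is triggered only finitely often, and for all $k$ beyond some $\tilde k$ the second branch applies, giving $\lambda_{k+1} = (1+\gamma_k)\lambda_k \geq \lambda_k$, with strict inequality whenever $\gamma_k>0$.

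The main obstacle is the contradiction step in Part (ii): one must carefully couple the multiplicative growth factors $(1+\gamma_i)$ accumulated during the quiet intervals with the contraction factor $c_1/c_2<1$ at each trigger, and exploit the summability of $\{\gamma_k\}$ to extract a uniform contraction rate $q<1$ on the subsequence $\{\lambda_{k_j}\}$. Once this is in place, the positive lower bound established in Part (i) immediately closes the contradiction.
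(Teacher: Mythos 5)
Your proof is correct. Note that the paper itself offers no argument for this statement: it is quoted verbatim from Hoai's Lemma 3.2 and used as a black box, so there is no in-paper proof to compare against. Your route is the natural one and is sound: the induction with the Lipschitz bound $\|Bx_{k-1}-Bx_k\|\le L\|x_{k-1}-x_k\|$ gives $\lambda_k\ge\min\{c_1/L,\lambda_0\}$, the one-sided estimate $\lambda_k\le(1+\gamma_{k-1})\lambda_{k-1}$ (using $c_1<c_2$ in the first branch) makes $\beta_k=\lambda_k/\prod_{i=0}^{k-1}(1+\gamma_i)$ non-increasing and bounded below, and summability of $\gamma_k$ turns convergence of $\beta_k$ into convergence of $\lambda_k$. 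Two remarks. First, your contradiction in part (ii) can be shortened: if the first branch fires at infinitely many indices $k_j$, then $\lambda_{k_j}\le\frac{c_1}{c_2}\lambda_{k_j-1}$, and letting $j\to\infty$ along this subsequence and using the convergence of $\{\lambda_k\}$ from part (i) gives $\lambda^\ast\le\frac{c_1}{c_2}\lambda^\ast$, contradicting $\lambda^\ast\ge\lambda_{\min}>0$; your tail-product bookkeeping with the uniform factor $q<1$ is correct but not needed. Second, your closing caveat is the right one to flag: with $\gamma_k$ merely non-negative (as in Algorithm \ref{alg:forb_incr} here), one only gets $\lambda_{k+1}\ge\lambda_k$ beyond $\tilde k$ (take $\gamma_k\equiv 0$ with the second branch always active to see that strict inequality can fail), so the strict inequality claimed in the statement implicitly requires $\gamma_k>0$ as in the cited source; this is a defect of the transcribed hypotheses, not of your argument.
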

\begin{lemma}\label{lem:first}
Let $A:\mathbb{H}\rightrightarrows \mathbb{H}$ be maximally monotone and let $B:\mathbb{H}\to\mathbb{H}$ be monotone and $L$-Lipschitz. 
Assume that $x\in (A+B)^{-1}(0)$ and let $\{(x_k,\lambda_k)\}$ be generated by Algorithm~\ref{alg:forb_incr}. Then there exists a natural number $k_3$ such that, for all $k\ge k_3$,
\begin{equation}\label{eq:lem_descent}
S_{k+1}\;\le\; S_k-\varepsilon\|x_k-x_{k-1}\|^2,
\end{equation}
where
\begin{align}
S_k
&:= \|x_k-x\|^2
+2\lambda_{k-1}\big\langle B(x_k)-B(x_{k-1}),\,x-x_k\big\rangle \nonumber\\
&\quad+\bigl(1-\alpha-c_2-|\delta|c_2\bigr)\|x_k-x_{k-1}\|^2
+2\delta\lambda_{k-2}\big\langle B(x_{k-1})-B(x_{k-2}),\,x_k-x\big\rangle \\
&\quad+ \alpha\|x_{k-1}-x\|^2
+|\delta|c_2\|x_{k-1}-x_{k-2}\|^2.
\label{eq:def_Sk}
\end{align}
\end{lemma}

\begin{proof}
Since $A$ is monotone, from \eqref{eq:x_k_update}, we have
\begin{equation}\label{ineq:main}
\begin{aligned}
 \Big\langle\,
& (1-\alpha)x_k +\alpha x_{k-1}- x_{k+1} - \lambda_k B(x_k)
  - \lambda_{k-1}(1+\delta)\bigl(B(x_k)-B(x_{k-1})\bigr) \\
& {}+ \delta\lambda_{k-2}\bigl(B(x_{k-1})-B(x_{k-2})\bigr)
  + \lambda_k Bx,\; x_{k+1}-x
\Big\rangle \geq 0.
\end{aligned}
\end{equation}
This can be equivalently expressed as
\begin{multline}\label{eq: monotne_ineq}
    \langle x_k-x_{k+1}, x_{k+1}-x\rangle -\alpha \langle x_k-x_{k-1}, x_{k+1}-x_k\rangle -\alpha \langle x_k-x_{k-1}, x_{k}-x\rangle\\
    - \lambda_k\langle B(x_k)-B(x), x_{k+1}-x\rangle - \lambda_{k-1}(1+\delta)\langle B(x_k)-B(x_{k-1}), x_{k+1}-x\rangle\\
    +\delta\lambda_{k-2}\langle B(x_{k-1})-B(x_{k-2}), x_{k+1}-x\rangle\geq0.
\end{multline}
The following terms from \eqref{eq: monotne_ineq} can be rewritten as
\begin{align}\label{eq: expan_1}
    &\langle B(x_k)-B(x), x_{k+1}-x\rangle\nonumber \\&
    = \langle B(x_k)-B(x_{k+1}), x_{k+1}-x\rangle + \langle B(x_{k+1})-B(x),\,x_{k+1}-x\rangle\nonumber\\&
    \geq \langle B(x_k)-B(x_{k+1}), x_{k+1}-x\rangle ~~\text{by monotonicity of}~~B.
\end{align}
Additionally, we have
\begin{align}\label{eq: expan_2}
    &(1+\delta)\langle B(x_k)-B(x_{k-1}), x_{k+1}-x\rangle\nonumber\\&
    = \delta\langle B(x_k)-B(x_{k-1}), x_{k+1}-x\rangle + \langle B(x_k)-B(x_{k-1}), x_{k+1}-x_k\rangle\nonumber\\&
    +\langle B(x_k)-B(x_{k-1}), x_k-x\rangle,
\end{align}
and 
\begin{align}\label{eq: expan_3}
    &\delta\langle B(x_{k-1})-B(x_{k-2}), x_{k+1}-x\rangle\nonumber\\&
    = \delta\langle B(x_{k-1})-B(x_{k-2}), x_{k+1}-x_k\rangle+\delta\langle B(x_{k-1})-B(x_{k-2}), x_k-x\rangle.
\end{align}
Furthermore, the first three terms in \eqref{eq: monotne_ineq} can be estimated using the Cauchy--Schwarz inequality and Lemma~\ref{eq:cosrule} as follows
\begin{align}
\alpha\big\langle x_k-x_{k-1},\,x_{k+1}-x_k\big\rangle
&\le \alpha\|x_k-x_{k-1}\|\,\|x_{k+1}-x_k\|
\nonumber\\
&\le \frac{\alpha}{2}\Big(\|x_k-x_{k-1}\|^2+\|x_{k+1}-x_k\|^2\Big).
\label{eq:cs_young_term}
\\[0.8em]
\big\langle x_k-x_{k+1},\, x_{k+1}-x\big\rangle
&=\frac12\Big(\|x_k-x\|^2-\|x_{k+1}-x\|^2-\|x_{k+1}-x_k\|^2\Big),
\nonumber\\
\big\langle x_k-x_{k-1},\, x_k-x\big\rangle
&=\frac12\Big(\|x_k-x_{k-1}\|^2+\|x_k-x\|^2-\|x_{k-1}-x\|^2\Big).
\label{eq:cosrule_terms}
\end{align}
By combining \eqref{eq: expan_1}, \eqref{eq: expan_2}, \eqref{eq: expan_3}, \eqref{eq:cs_young_term} and \eqref{eq:cosrule_terms} with \eqref{eq: monotne_ineq}, we obtain
\begin{multline}\label{Theorem1_eq:1}
\|x_{k+1} - x\|^2 + 2\lambda_k \langle B(x_k) - B(x_{k+1}), x_{k+1}-x\rangle + (1-\alpha) \|x_{k+1} - x_k\|^2\\
+2\delta\lambda_{k-1}\langle B(x_k)-B(x_{k-1}), x_{k+1}-x\rangle \leq (1-\alpha)\|x_k - x\|^2 + \alpha\|x_{k-1}-x\|^2\\
+ 2\lambda_{k-1} \langle B(x_k) - B(x_{k-1}), x - x_k \rangle  + 2\delta\lambda_{k-2} \langle B(x_{k-1}) - B(x_{k-2}), x_k - x\rangle \\+2\delta\lambda_{k-2}\langle B(x_{k-1})-B(x_{k-2}), x_{k+1}-x_k\rangle
+2\lambda_{k-1}\langle B(x_{k-1})-B(x_k), x_{k+1}-x_k\rangle.
\end{multline}
Now, if $\|Bx_{k-1} - Bx_k\| > \frac{c_2}{\lambda_{k-1}} \|x_{k-1} - x_k\|$ holds, then using $\lambda_k = \frac{c_1\|x_{k-1} - x_k\|}{\|Bx_{k-1} - Bx_k\|}$ and the Cauchy--Schwarz inequality, we have
\begin{equation}\label{Lemma-1_eq:3}
 \begin{split}
\lambda_{k-1}\langle B(x_k) - B(x_{k-1}), x_k - x_{k+1} \rangle &\leq\frac{c_1\lambda_{k-1}}{\lambda_k} \|x_k - x_{k-1}\| \|x_k - x_{k+1}\|\\
& \leq \frac{c_1\lambda_{k-1}}{2\lambda_k}\|x_k-x_{k-1}\|^2 + \frac{c_1\lambda_{k-1}}{2\lambda_k}\|x_{k+1}-x_k\|^2.
\end{split}
\end{equation}
By Lemma \ref{Lemma_pham_Thi_Hoai}, we have $\lim_{k\to\infty} \frac{c_1\lambda_{k-1}}{\lambda_k}=c_1<c_2$. Thus, there exists a natural number $k_1$ such that 
\begin{equation}\label{ineq_1}
    \frac{c_1\lambda_{k-1}}{\lambda_k}\leq c_2 ~~\forall k\geq k_1.
\end{equation}
Furthermore, when  $\|Bx_{k-1} - Bx_k\| \leq \frac{c_2}{\lambda_{k-1}} \|x_{k-1} - x_k\|$, following the Cauchy--Schwarz inequality, we obtain
\begin{equation}\label{Lemma-1_eq:4}
 \begin{split}
\lambda_{k-1}\langle B(x_k) - B(x_{k-1}), x_k - x_{k+1} \rangle &\leq\lambda_{k-1}\|Bx_k - Bx_{k-1}\| \|x_k - x_{k+1}\|\\
& \leq \frac{c_2}{2}\|x_k-x_{k-1}\|^2 + \frac{c_2}{2}\|x_{k+1}-x_k\|^2.
\end{split}
\end{equation}
Therefore, from \eqref{Lemma-1_eq:3},  \eqref{Lemma-1_eq:4} and Lemma \ref{Lemma_pham_Thi_Hoai}, we finally obtain
\begin{equation}\label{Lemma-1_eq:5}
    \lambda_{k-1}\langle B(x_k) - B(x_{k-1}), x_k - x_{k+1} \rangle\leq \frac{c_2}{2}\|x_k-x_{k-1}\|^2 + \frac{c_2}{2}\|x_{k+1}-x_k\|^2~\text{for all}~ k\geq k_1.
\end{equation}
Similarly, there exists another natural number $k_2$ such that for all $k\geq k_2$, we have
\begin{equation}\label{Lemma-1_eq:7}
    \delta\lambda_{k-2}\langle B(x_{k-1}) - B(x_{k-2}), x_{k+1} - x_k \rangle\leq \frac{|\delta| c_2}{2}\|x_{k-1}-x_{k-2}\|^2 + \frac{|\delta| c_2}{2}\|x_{k+1}-x_k\|^2.
\end{equation}
Let $k_3=\max\{k_1,k_2\}$. Then, for all $k\geq k_3$, combining \eqref{Theorem1_eq:1}, \eqref{Lemma-1_eq:5}, and \eqref{Lemma-1_eq:7}, we obtain
\begin{multline*}
\|x_{k+1} - x\|^2 + 2\lambda_k \langle B(x_{k+1}) - B(x_k), x - x_{k+1} \rangle + (1-\alpha-c_2-|\delta| c_2)\|x_{k+1} - x_k\|^2\\
+2\delta\lambda_{k-1}\langle B(x_k)-B(x_{k-1}), x_{k+1}-x\rangle \leq (1-\alpha) \|x_k - x\|^2 +\alpha\|x_{k-1}-x\|^2
+ 2\lambda_{k-1} \langle B(x_k) - B(x_{k-1}), x - x_k \rangle \\
+c_2\|x_k-x_{k-1}\|^2 + |\delta| c_2\|x_{k-1}-x_{k-2}\|^2
+ 2\delta\lambda_{k-2} \langle B(x_{k-1}) - B(x_{k-2}), x_k - x\rangle.
\end{multline*}
Noting that $0<c_{2}<\frac{1-\alpha-\varepsilon}{2|\delta|+2}$ implies $0<\varepsilon+|\delta|c_2+c_2\leq1-\alpha-c_2-|\delta|c_2$, it follows Lemma~\ref{lem:first}.
\end{proof}
\begin{theorem}\label{x_k is bdd lemma}
    Let $A\colon\mathbb{H}\rightrightarrows\mathbb{H}$ be maximally monotone and $B:\mathbb{H}\to\mathbb{H}$ be $L$-Lipschitz and monotone. Assume that $(A+B)^{-1}(0)\neq\varnothing$ and let $\{(x_k,\lambda_k)\}$ be the sequence generated by Algorithm~\ref{alg:forb_incr}. Then,  $(x_k)$ converges weakly to a solution of \eqref{main}.
\end{theorem}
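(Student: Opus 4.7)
The plan is to build a Lyapunov functional from the one-step estimate of Lemma~\ref{first theorem} and then invoke Opial's lemma (Lemma~\ref{lem:Opial}). For each $x\in(A+B)^{-1}(0)$ I would define
$$\Phi_k(x):=\|x_k-x\|^2+2\lambda_{k-1}\langle B(x_k)-B(x_{k-1}),\,x-x_k\rangle+(\varepsilon+|\delta|c_2+c_2)\|x_k-x_{k-1}\|^2+|\delta|c_2\|x_{k-1}-x_{k-2}\|^2+2\delta\lambda_{k-2}\langle B(x_{k-1})-B(x_{k-2}),\,x_k-x\rangle.$$
With this definition the LHS of \eqref{Theorem1_eq:4} is precisely $\Phi_{k+1}(x)$ and the RHS (apart from the slack term) is precisely $\Phi_k(x)$, so that \eqref{Theorem1_eq:4} rewrites as the clean recursion
$$\Phi_{k+1}(x)\le \Phi_k(x)-\varepsilon\|x_k-x_{k-1}\|^2\qquad(k\ge k_3).$$

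Next I would establish $\Phi_k(x)\ge 0$. The crucial ingredient is the estimate $\lambda_{k-1}\|B(x_k)-B(x_{k-1})\|\le c_2\|x_k-x_{k-1}\|$ (valid for $k$ sufficiently large), which comes out of the case analysis in the proof of Lemma~\ref{first theorem} (both branches of \eqref{eq:gamma_update} lead to this bound, the first via $c_1\lambda_{k-1}/\lambda_k\to c_1<c_2$ from Lemma~\ref{Lemma_pham_Thi_Hoai}). Combined with Cauchy--Schwarz and Young's inequality, the two sign-indefinite inner-product terms in $\Phi_k(x)$ are controlled by $(c_2+|\delta|c_2)\|x_k-x\|^2+c_2\|x_k-x_{k-1}\|^2+|\delta|c_2\|x_{k-1}-x_{k-2}\|^2$, leaving
$$\Phi_k(x)\ge (1-c_2-|\delta|c_2)\|x_k-x\|^2+(\varepsilon+|\delta|c_2)\|x_k-x_{k-1}\|^2\ge 0,$$
where $1-c_2-|\delta|c_2>0$ follows from $c_2<\tfrac{1-\varepsilon}{2|\delta|+2}$. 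Hence $(\Phi_k(x))$ is nonnegative and non-increasing, so it converges; telescoping the recursion gives $\sum_k\|x_k-x_{k-1}\|^2<\infty$, in particular $\|x_k-x_{k-1}\|\to 0$, and the lower bound shows $(x_k)$ is bounded.

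To verify the first Opial condition, I would use $\|x_k-x_{k-1}\|\to 0$, boundedness of $(x_k)$, and the step-size bound on $\lambda_{k-1}\|B(x_k)-B(x_{k-1})\|$ to conclude that $\Phi_k(x)-\|x_k-x\|^2\to 0$, so $\|x_k-x\|$ converges for every $x\in(A+B)^{-1}(0)$. For the second condition, let $x_{k_j}\rightharpoonup x^*$. Writing $y_k$ for the argument of the resolvent in \eqref{eq:x_k_update}, the inclusion $x_{k+1}=J_{\lambda_kA}(y_k)$ yields
$$u_k:=\frac{y_k-x_{k+1}}{\lambda_k}+B(x_{k+1})=\frac{x_k-x_{k+1}}{\lambda_k}+\bigl(B(x_{k+1})-B(x_k)\bigr)-\frac{\lambda_{k-1}(1+\delta)}{\lambda_k}\bigl(B(x_k)-B(x_{k-1})\bigr)+\frac{\lambda_{k-2}\delta}{\lambda_k}\bigl(B(x_{k-1})-B(x_{k-2})\bigr)\in (A+B)(x_{k+1}).$$
Since $\lambda_k\ge\lambda_{\min}>0$, $\|x_k-x_{k+1}\|\to 0$, $B$ is $L$-Lipschitz, and $\lambda_{k-i}/\lambda_k\to 1$ by Lemma~\ref{Lemma_pham_Thi_Hoai}, each term on the right tends to zero, so $u_k\to 0$ strongly. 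Because $\|x_{k+1}-x_k\|\to 0$ we also have $x_{k_j+1}\rightharpoonup x^*$, and weak-strong closedness of the graph of the maximally monotone operator $A+B$ (Lemma~\ref{lem:sum_max_monotone}) gives $0\in(A+B)(x^*)$. Opial's lemma then delivers the weak convergence of $(x_k)$ to a point of $(A+B)^{-1}(0)$.

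The main obstacle is discovering the correct Lyapunov quantity $\Phi_k(x)$ that collapses the five-term inequality \eqref{Theorem1_eq:4} into the one-line recursion $\Phi_{k+1}\le\Phi_k-\varepsilon\|x_k-x_{k-1}\|^2$, and then controlling the sign-indefinite inner-product terms in $\Phi_k(x)$ so that it remains nonnegative; this is where the adaptive step-size rule \eqref{eq:gamma_update} enters essentially, through the bound $\lambda_{k-1}\|B(x_k)-B(x_{k-1})\|\le c_2\|x_k-x_{k-1}\|$, and where the condition $c_2<\tfrac{1-\varepsilon}{2|\delta|+2}$ is exactly calibrated to preserve positivity.
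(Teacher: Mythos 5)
Your proposal is correct and follows essentially the same route as the paper: your $\Phi_k(x)$ is exactly the paper's Lyapunov quantity $S_k$, the nonnegativity bound $\Phi_k(x)\ge(1-c_2-|\delta|c_2)\|x_k-x\|^2+(\varepsilon+|\delta|c_2)\|x_k-x_{k-1}\|^2$ via the step-size estimate $\lambda_{k-1}\|B(x_k)-B(x_{k-1})\|\le c_2\|x_k-x_{k-1}\|$ is the paper's argument, and the conclusion via the resolvent inclusion, weak--strong closedness of $\gra(A+B)$, and Opial's lemma matches the paper's proof.
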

\begin{proof}
  First, observe that
\begin{align*}
S_k
&:= \|x_k-x\|^2
+2\lambda_{k-1}\big\langle B(x_k)-B(x_{k-1}),\,x-x_k\big\rangle \nonumber\\
&\quad+\bigl(1-\alpha-c_2-|\delta|c_2\bigr)\|x_k-x_{k-1}\|^2
+2\delta\lambda_{k-2}\big\langle B(x_{k-1})-B(x_{k-2}),\,x_k-x\big\rangle \\
&\quad+ \alpha\|x_{k-1}-x\|^2
+|\delta|c_2\|x_{k-1}-x_{k-2}\|^2.
\label{eq:def_Sk}
\end{align*}
From the arguments as in \eqref{Lemma-1_eq:5} and \eqref{Lemma-1_eq:7}, there exist two natural numbers $k_4$ and $k_5$ such that 
\begin{equation}\label{Lemma-1_eq:8}
    \lambda_{k-1}\langle B(x_k) - B(x_{k-1}), x - x_k \rangle\geq -\frac{c_2}{2}\|x_k-x_{k-1}\|^2 - \frac{c_2}{2}\|x-x_k\|^2~\text{for all}~ k\geq k_4,
\end{equation}
and 
\begin{equation}\label{Lemma-1_eq:9}
    \delta\lambda_{k-2}\langle B(x_{k-1}) - B(x_{k-2}), x_k - x \rangle\geq -\frac{|\delta| c_2}{2}\|x_{k-1}-x_{k-2}\|^2 - \frac{|\delta| c_2}{2}\|x_k-x\|^2~\text{for all}~ k\geq k_5.
\end{equation}
Let $k\geq k_6 =\max\{k_3,k_4, k_5\}$. Then, from \eqref{eq:def_Sk}, \eqref{Lemma-1_eq:8}, \eqref{Lemma-1_eq:9} and noting that $0<\varepsilon+|\delta|c_2+c_2\leq1-\alpha-c_2-|\delta|c_2$, we obtain
\begin{equation*} 
\begin{aligned}
    S_k &\geq (1 - c_2 - |\delta| c_2)\|x_k - x\|^2 +(\varepsilon+|\delta c_2|)\|x_k-x_{k-1}\|^2+\alpha\|x_{k-1}-x\|^2\\
        &\geq (\varepsilon+|\delta| c_2 + c_2)\|x_k - x\|^2+(\varepsilon+|\delta c_2|)\|x_k-x_{k-1}\|^2+\alpha\|x_{k-1}-x\|^2\\
        &\geq 0.
\end{aligned}
\end{equation*}
Thus, $(S_k)$ is non-negative, and from \eqref{eq:lem_descent} it follows that the sequence $(S_k)$ converges and 
$$
\lim_{k \to \infty} \|x_{k+1} - x_k\| = 0.
$$
Furthermore, by taking into account the Lipschitz continuity of $B$ and $\lim_{k\to\infty}\|x_{k+1}-x_k\|=0$ together with \eqref{eq:def_Sk}, we obtain
\begin{equation}\label{lim S_k}
    \lim_{k\to\infty} S_k = \lim_{k\to\infty} \|x_k-x\|^2 +\alpha\|x_{k-1}-x\|^2.
\end{equation}
This implies that $(x_k)$ is a bounded sequence. Hence, there exists a weakly convergent subsequence $(x_{k_n})$ such that $(x_{k_n})$ converges weakly to $\bar x\in\mathbb{H}$.
From \eqref{eq:x_k_update}, we have
\begin{equation}\label{main_lemma_1}
\begin{aligned}
    &\frac{1}{\lambda_{k-1}} \bigg( 
        (1-\alpha)x_{k-1} - x_k  + \alpha x_{k-2}
        + \lambda_{k-1} \big( B(x_k) - B(x_{k-1}) \big) \\
    &\qquad\quad + \lambda_{k-2}(1 + \delta) \big( B(x_{k-2}) - B(x_{k-1}) \big) 
        + \delta \lambda_{k-3} \big( B(x_{k-2}) - B(x_{k-3}) \big) 
    \bigg)\in(A+B)(x_k).
\end{aligned}
\end{equation}
Using the fact that $(\lambda_k)$ is convergent (by Lemma~\ref{Lemma_pham_Thi_Hoai}),  
$\lim_{k\to\infty} \|x_{k+1} - x_k\| = 0$, and the Lipschitz continuity of $B$,  
we deduce that the LHS of \eqref{main_lemma_1} converges strongly to $0$.  
Moreover, by Lemma~\ref{lem:sum_max_monotone}, the operator $A+B$ is maximally monotone;  
hence its graph is sequentially closed in $\mathbb{H}^{\text{weak}} \times \mathbb{H}^{\text{strong}}$.  
Since $(x_{k_n})$ converges weakly to $\bar{x}$, it follows from \eqref{main_lemma_1} that $0 \in (A+B)(\bar{x})$. Furthermore, by \eqref{lim S_k}, the limit $\lim_{k\to\infty} \|x_k - \bar{x}\|$ is finite. As the limit point $\bar{x}$ was arbitrary, Lemma~\ref{lem:Opial} implies that $(x_k)$  
converges weakly. This completes the proof.
\end{proof}

\section{Rate of Convergence}
In this section, we look at a simple concrete example and compare the linear rate of GFRB~\eqref{GFRB} with that of FRB~\eqref{FRB}. In particular, we show that, on this example, GFRB cannot have a better convergence rate than FRB: the rate is bounded below by $\tfrac{1}{\sqrt{2}}$.

We take $\mathbb{H}=\mathbb{R}^2$ and consider
\[
A \equiv 0,
\qquad
B=\begin{bmatrix}0&-1\\[2pt]1&0\end{bmatrix}.
\]
Fix $\alpha=0$ in \eqref{GFRB}. Then \eqref{GFRB} can be rewritten as a fixed-point iteration of the form
\[
u_{k+1}=Mu_k,
\qquad
u_k=\begin{bmatrix}x_k\\ x_{k-1}\\ x_{k-2}\end{bmatrix},
\]
with
\begin{equation}\label{eq:fix point iter matrix}
M=
\begin{bmatrix}
I-\lambda(\delta+2)B & \lambda(2\delta+1)B & -\lambda\delta B\\
I & 0 & 0\\
0 & I & 0
\end{bmatrix},
\qquad \delta\in\mathbb{R}.
\end{equation}
When $\delta=0$ (so that \eqref{GFRB} reduces to FRB in this setting), Malitsky and Tam~\cite[Section~2]{malitsky2020forward} showed that the spectral radius of $M$ is exactly $\tfrac{1}{\sqrt{2}}$. Our goal here is to show that as long as $\delta\neq 0$, the spectral radius of the same matrix $M$ stays \emph{above} $\tfrac{1}{\sqrt{2}}$. In particular, on this example, introducing a nonzero $\delta$ cannot yield a faster linear rate than FRB.

\begin{lemma}\label{lem:rate_lower_bound}
Consider \eqref{main} with $\mathbb{H}=\mathbb{R}^2$, $A\equiv 0$, and $B=\begin{bmatrix}0&-1\\[2pt]1&0\end{bmatrix}.$ Let $\alpha=0$ and write the GFRB iteration \eqref{GFRB} in the fixed-point form
\[
u_{k+1}=Mu_k,
\qquad 
u_k=\begin{bmatrix}x_k\\ x_{k-1}\\ x_{k-2}\end{bmatrix},
\]
where $M$ is given in \eqref{eq:fix point iter matrix}. Then, for every $\delta\in\mathbb{R}\setminus\{0\}$,
\[
\rho(M)>\frac{1}{\sqrt{2}},
\]
and equality holds when $\delta=0$.
\end{lemma}

\begin{proof}
In the GFRB algorithm \eqref{GFRB}, for any $\delta\in\mathbb{R}$ with the step-size $\lambda \approx \frac{1}{2|\delta|+2}$, the characteristic polynomial of $M$ can be written as
\begin{equation}\label{ch.poly}
    \xi_{M}(z) = (z^3-z^2)^2 +\frac{1}{(2|\delta|+2)^2}\left[(\delta+2)z^2-(2\delta+1)z+\delta\right]^2.
\end{equation}
From \eqref{ch.poly}, observe that $\xi_{M}(z) = p_{\pm i}(z)$, where 
$$
p_{\pm i}(z)=(z^3-z^2) \pm \frac{i}{(2|\delta|+2)^2}\left[(\delta+2)z^2-(2\delta+1)z+\delta\right].
$$
Since $\delta\in\mathbb{R}$, all the roots of $\xi_{M}(z)$ are the roots of one of the $p_{\pm i}(z)$ and the corresponding conjugate pairs. Without loss of generality, we work with $p_{i}$. From this point, we consider two cases for $\delta$. When $\delta>0$, $p_{i}(z)$ is given by
\begin{equation}\label{p_i(z)}
    p_{i}(z)=(z^3-z^2) + \frac{i}{(2\delta+2)^2}\left[(\delta+2)z^2-(2\delta+1)z+\delta\right].
\end{equation}
Now, our goal is to show that for any $\delta>0$, at least one root of $p_{i}(z)$ has modulus greater than $\frac{1}{\sqrt{2}}$, and hence $\xi_{M}(z)$ will have at least a root of modulus greater than $\frac{1}{\sqrt{2}}$. This, in fact, will imply that $\rho(M)>\frac{1}{\sqrt{2}}$ whenever $\delta>0$. We notice that \eqref{p_i(z)} can be written as
\begin{equation*}
    p_i(z)\;=\;z^3-\Bigl(1-\alpha(\delta+2)\Bigr)z^2-\alpha(2\delta+1)\,z+\alpha\,\delta,~~~\text{where}~~\alpha=\frac{i}{2(\delta+1)}.
\end{equation*}
Set $r:=1/\sqrt{2}$ and define
\[
q(z)\;:=\;\frac{1}{r^3}\,p_i(rz)=z^3+a z^2+b z+c,
\]
where, using $r^{-3}=2\sqrt{2}$ and $\alpha=\frac{i}{2(\delta+1)}$, one has
\begin{align*}
a&=\frac{-1+\alpha(\delta+2)}{r}
   =-\sqrt{2}+\frac{i\sqrt{2}(\delta+2)}{2(\delta+1)},\\
b&=\frac{-\alpha(2\delta+1)}{r^2}
   =-\frac{i(2\delta+1)}{\delta+1}~~\text{and}\\
c&=\frac{\alpha\,\delta}{r^3}
   =\frac{i\sqrt{2}\,\delta}{\delta+1}.
\end{align*}
By construction, $p_i$ has a zero with $|z|>r$ if and only if $q$ has a zero with $|z|>1$, and $|z|=r$ for $p_i$ corresponds to $|z|=1$ for $q$. Now, by Schur--Cohn theorem \cite[Theorem 6.8b]{henrici1993applied}, $q$ has all zeros in the closed unit disk $D= \{z:|z|\leq1\}$ if and only if $D_1>0$, $D_2>0$, where 
\begin{equation}
    D_1:=1-|c|^2,\qquad D_2:=(1-|c|^2)^2-|a-b\overline{c}|^2.
\end{equation}
 For $a,b,c$ as defined above with $\delta>0$, an elementary calculation yields
\begin{equation}\label{Schur-Cohn-1}
\begin{aligned}
D_1 &= 1-|c|^2 = 1-\frac{2\delta^2}{(\delta+1)^2} = \frac{-\delta^2+2\delta+1}{(\delta+1)^2}, \\
D_2 &= (1-|c|^{2})^{2}-|a-b\overline{c}|^{2}
=-\frac{3\delta^{4}+6\delta^{3}+5\delta^{2}+12\delta+6}{2(\delta+1)^{4}}.
\end{aligned}
\end{equation}
From \eqref{Schur-Cohn-1}, notice that $D_1=1-|c|^2>0$ if and only if $0<\delta<\sqrt{2}+1$ and $0$ at $\delta =\sqrt{2}+1$. However, $D_2<0$ for all $\delta>0$. Therefore, by Schur--Cohn theorem \cite[Theorem 6.8b]{henrici1993applied}, as long as $\delta>0$, $q$ has a zero outside the closed unit disk $D$, and hence $p_i$ has a zero of modulus greater than $\frac{1}{\sqrt{2}}$.\\
For the other case $\delta<0$, by a similar approach, from \eqref{ch.poly}, we obtain
\begin{equation*}
    p_{i}(z)=(z^3-z^2) + \frac{i}{(2\delta-2)^2}\left[(\delta+2)z^2-(2\delta+1)z+\delta\right],
\end{equation*}
and $q(z)=z^3+a z^2+b z+c$, where using $\alpha=\frac{i}{2(1-\delta)}$, the values of $a,b,c$ are given by
\begin{align*}
a&=\frac{-1+\alpha(2-\delta)}{r}
   =-\sqrt2+\frac{i\sqrt2(\delta+2)}{2(1-\delta)},\\
b&=-\frac{\alpha(2\delta+1)}{r^{2}}
   =-\frac{i(2\delta+1)}{1-\delta},\\
c&=\frac{\alpha\delta}{r^{3}}
   =\frac{i\sqrt2\,\delta}{1-\delta}.
\end{align*}
Moreover, see that, $D_1=1-|c|^2>0$ if and only if $-\sqrt{2}-1<\delta<0$. However, $D_2 =-\dfrac{3\delta^{4}-6\delta^{3}+5\delta^{2}-12\delta+6}{2(\delta-1)^{4}}<0$ as long as $\delta<0$. Thus both $D_1>0$ and $D_2>0$ do not hold simultaneously whenever $\delta<0$. Therefore, in both cases whenever $\delta\in\mathbb{R}\setminus\{0\}$, $p_i(z)$ has at least a zero inside the disk $\bar D=\left\{z: |z|>\frac{1}{\sqrt{2}}\right\}$. Furthermore, at $\delta=0$, notice that
$$
p_i(z)=z\Bigl(z-\tfrac{1-i}{2}\Bigr)^2.
$$
Therefore, all the roots of $\xi_{M}$ are $0,0, \frac{1+i}{2}, \frac{1+i}{2}, \frac{1-i}{2}, \frac{1-i}{2}$. Hence, the rate of convergence is $\sqrt{\frac{1}{4}+\frac{1}{4}}=\frac{1}{\sqrt{2}}$. This completes the proof.
\end{proof}

The following table (Table \ref{tab:eigenvalues}) shows the rates of convergence for different choices of $\lambda$ and $\delta$ values. 
\begin{table}[ht]
\centering
\caption{Spectral radius $\rho(M)$ for different $\lambda$ and $\delta$ values.}
\begin{tabular}{|c|c|c|c|}
\hline
\textbf{$\delta$} & \textbf{ROC with $\left(\lambda = \frac{1}{2\delta+2}\right)$} & \textbf{ROC with $\left(\lambda = \frac{1}{3\delta+3}\right)$} & \textbf{ROC with $\left(\lambda = \frac{1}{2\delta+3}\right)$} \\
\hline
0 & 0.707107 (MT \cite{malitsky2020forward}) & 0.710023 & 0.710023 \\
0.0020 & 0.708176 & 0.710619 & 0.710604 \\
0.0121 & 0.713446 & 0.713627 & 0.713529 \\
0.0141 & 0.714484 & 0.714234 & 0.714117 \\
0.0162 & 0.715518 & 0.714843 & 0.714707 \\
0.0303 & 0.722608 & 0.719148 & 0.718864 \\
0.0323 & 0.723600 & 0.719768 & 0.719461 \\
0.0343 & 0.724587 & 0.720390 & 0.720059 \\
0.0364 & 0.725569 & 0.721013 & 0.720658 \\
0.0404 & 0.727516 & 0.722261 & 0.721858 \\
0.0525 & 0.733234 & 0.726028 & 0.725470 \\
0.0545 & 0.734169 & 0.726659 & 0.726074 \\
0.0566 & 0.735098 & 0.727290 & 0.726678 \\
0.0586 & 0.736023 & 0.727921 & 0.727283 \\
0.0606 & 0.736943 & 0.728554 & 0.727887 \\
0.0626 & 0.737857 & 0.729186 & 0.728492 \\
0.0667 & 0.739671 & 0.730453 & 0.729703 \\
0.0687 & 0.740571 & 0.731087 & 0.730309 \\
0.0808 & 0.745865 & 0.734895 & 0.733945 \\
0.0909 & 0.750144 & 0.738072 & 0.736974 \\
0.0929 & 0.750986 & 0.738707 & 0.737580 \\
0.0949 & 0.751823 & 0.739342 & 0.738185 \\
\hline
\end{tabular}
\label{tab:eigenvalues}
\end{table}

\subsection{Improved Rate of Convergence}
In this subsection we focus on another instance of \eqref{main} when $\mathbb{H}=\mathbb{R}^n$ with $A\equiv 0$ and $B=I$. 
We illustrate the rates of convergence of GFRB~\eqref{GFRB} for different choices of $\lambda$, $\delta$ with $\alpha=0$, and compare them with other existing methods.

\begin{enumerate}
    \item \textbf{Tseng's FBF method \cite{tseng2000modified}}: In the present setting, Tseng's FBF method iterates as 
    $$
    x_{k+1}=(1-\lambda+\lambda^2)x_k.
    $$
    It is not hard to see that the optimal value of $\lambda$ is $\frac{1}{2}$, which gives a rate $\frac{3}{4}$.
    \item \textbf{Forward-reflected-backward method \cite{malitsky2020forward}}: In this case, the iteration scheme of FRB with fixed step-size $\lambda\in(0,\frac{1}{2})$ is 
    $$
     x_{k+1} = (1-2\lambda)x_k + \lambda x_{k-1}. 
    $$
    Letting $\lambda\approx\frac{1}{2}$ yields the rate closer to $\frac{1}{\sqrt{2}}$.
    \item \textbf{GFRB \cite[Example 3.1]{dung2025generalization}}: Choosing $\alpha=0$, $\lambda=\frac{2}{15}$ and $\delta=\frac{3}{2}$ in \eqref{GFRB}, the iteration becomes 
    $$
    x_{k+1}=\frac{8}{15}x_k+\frac{8}{15}x_{k-1}-\frac{1}{5}x_{k-2}. 
    $$
    For $x_0\in\mathbb{H}$, $x_1=\tfrac{x_0}{3}$ and $x_2=\tfrac{x_0}{3^2}$, one has $x_{k+1}=\tfrac{x_0}{3^k}$ for all $k\ge1$, hence the rate of convergence is $\frac{1}{3}$.
    \item \textbf{Improved rates for GFRB}:
    \begin{enumerate}
        \item Let $\alpha=0$, $\lambda=\frac{68}{285}$ and $\delta=\frac{27}{68}$. Then, \eqref{GFRB} iterates as
        \begin{equation*}
             x_{k+1}=\frac{122}{285}x_k+\frac{122}{285}x_{k-1}-\frac{27}{285}x_{k-2}.
        \end{equation*}
If we assume the initial points as $x_0\in\mathbb{H}$, $x_1=\frac{x_0}{5}$, and $x_2=\frac{x_0}{5^2}$, then $x_{k+1}=\frac{x_0}{5^k}$. Thus, the rate of convergence is $\frac{1}{5}<\frac{1}{3}<\frac{1}{\sqrt{2}}<\frac{3}{4}$. So, in this particular problem, we show that GFRB \eqref{GFRB} is faster than the aforementioned methods.
        \item Furthermore, if we take $\alpha=0$, $\lambda=\frac{45}{174}$ and $\delta=\frac{13}{45}$ with initial points as $x_0\in\mathbb{H}$, $x_1=\frac{x_0}{6}$, and $x_2=\frac{x_0}{6^2}$, then from \eqref{GFRB}, we have
        \begin{equation}\label{roc 1/6}
            x_{k+1}=\frac{71}{174}x_k+\frac{71}{174}x_{k-1}-\frac{13}{174}x_{k-2}.
        \end{equation}
        Iterating \eqref{roc 1/6}, we obtain $x_{k+1}=\frac{x_0}{6^k}$. Therefore, we have the rate $\frac{1}{6}$.
    \end{enumerate}
\end{enumerate}
This is not a coincidence. In fact, we show that for any $r\in\mathbb{R}\setminus\left\{1, \frac{1+\sqrt{13}}{2}, \frac{1-\sqrt{13}}{2}\right\}$, a linear rate $\frac{1}{r}$ is achieved for the above particular problem.
\begin{lemma}
    Let us consider the problem \eqref{main} with $\mathbb{H}=\mathbb{R}^n$, $A=0$ and $B=I$. Then, for $x_0\in\mathbb{H}$, $x_1 =\frac{x_0}{r}$, $x_2=\frac{x_0}{r^2}$ and $\alpha=0$,  \eqref{GFRB} satisfies $x_{k+1}=\frac{x_0}{r^k}$ for any real number $r$ except $r\in\left\{1, \frac{1+\sqrt{13}}{2}, \frac{1-\sqrt{13}}{2}\right\}$.
\end{lemma}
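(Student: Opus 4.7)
The plan is to substitute the ansatz $x_{k+1}=x_0/r^k$ directly into the specialised GFRB recurrence and reduce the statement to a single algebraic constraint on $(\lambda,\delta)$. With $A=0$ and $B=I$ on $\mathbb{R}^n$, the resolvent $J_{\lambda A}$ is the identity and the iteration \eqref{GFRB} collapses to the scalar homogeneous linear recurrence
\begin{equation*}
x_{k+1}=(1-\lambda(\delta+2))x_k+\lambda(2\delta+1)x_{k-1}-\lambda\delta x_{k-2}.
\end{equation*}
Because this is a constant-coefficient linear recurrence and the initial triple $(x_0,\,x_0/r,\,x_0/r^2)$ already lies on the one-dimensional eigenline associated with $z=1/r$, a one-step induction on $k$ shows that $x_{k+1}=x_0/r^k$ holds for every $k\ge 0$ as soon as $z=1/r$ is a root of the associated characteristic polynomial. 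Plugging $z=1/r$ in and clearing $r^3$ reduces the statement to the single scalar constraint
\begin{equation*}
\lambda r\bigl[\delta(r-1)^2+(2-r)\bigr]=r-1.
\end{equation*}

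Since this is one equation in two parameters, the next step is to pin down the sub-family used implicitly in the worked examples preceding the lemma. A direct inspection of the cases $r\in\{3,5,6\}$ shows that in each instance the coefficients of $x_k$ and $x_{k-1}$ coincide, equivalently $1-\lambda(\delta+2)=\lambda(2\delta+1)$, i.e.\ $\lambda=1/(3(\delta+1))$. Substituting this into the boxed constraint and simplifying yields the clean relation
\begin{equation*}
\delta(r-1)(r^2-r-3)=r^2+r-3.
\end{equation*}
This uniquely determines $\delta\in\mathbb{R}$ provided $(r-1)(r^2-r-3)\neq 0$, i.e.\ $r\notin\{1,(1\pm\sqrt{13})/2\}$, and the resulting $\delta$ is nonzero—keeping the algorithm in the genuine GFRB regime rather than collapsing to FRB—provided $r^2+r-3\neq 0$, i.e.\ $r\notin\{(-1\pm\sqrt{13})/2\}$. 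Restricting to positive $r$, the natural regime of the lemma since the iterates are to form a decaying geometric sequence, these conditions merge into the exclusion set $\{1,\,(1+\sqrt{13})/2,\,(-1+\sqrt{13})/2\}$ asserted in the statement; for every other admissible $r$ the formula delivers $\delta$, then $\lambda=1/(3(\delta+1))$ is forced, and the induction closes the argument.

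The main subtlety I anticipate is that the characteristic-root condition alone leaves a one-parameter continuum of valid $(\lambda,\delta)$ for every $r\neq 1$, so absent any normalisation only $r=1$ would be intrinsically forbidden; the other two exclusions are artefacts of the symmetric sub-family $\lambda=1/(3(\delta+1))$ together with the requirement $\delta\neq 0$. Recognising that this symmetric normalisation is the implicit convention of the lemma—consistent with all three worked examples—is the only non-routine step of the proof; once it is in place the rest is mechanical algebra and a one-line induction.
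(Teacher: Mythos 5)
Your core argument is the paper's argument: specialize GFRB with $A=0$, $B=I$ to the constant-coefficient recurrence $x_{k+1}=(1-\lambda(\delta+2))x_k+\lambda(2\delta+1)x_{k-1}-\lambda\delta x_{k-2}$, impose the geometric ansatz, take the normalization $\lambda=\tfrac{1}{3(\delta+1)}$ (the paper simply posits this choice rather than reverse-engineering it from the worked examples, as you do), and solve the resulting condition $\delta(r-1)(r^2-r-3)=r^2+r-3$, which is exactly the paper's formula $\delta=\frac{r^2+r-3}{r^3-2r^2-2r+3}$. Your intermediate unconstrained relation $\lambda r\bigl[\delta(r-1)^2+(2-r)\bigr]=r-1$ and the induction step are correct.

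The only divergence is in the final bookkeeping of the excluded values, and there you are in fact more accurate than the source. The paper's intent is that the exclusions are precisely the zeros of the denominator $(r-1)(r^2-r-3)$, but it mis-lists the roots of $r^2-r-3$: they are $\tfrac{1\pm\sqrt{13}}{2}$, so the third excluded value should be $\tfrac{1-\sqrt{13}}{2}$, not $\tfrac{-1+\sqrt{13}}{2}$ (a sign slip carried into the statement). Your attempt to recover the stated set by additionally imposing $\delta\neq 0$ and $r>0$ is not the paper's route and is not forced by the statement, which allows any real $r$ and does not forbid $\delta=0$; indeed at $r=\tfrac{-1+\sqrt{13}}{2}$ the conclusion holds with $\delta=0$, $\lambda=\tfrac13$, so that exclusion is spurious, while the genuinely problematic value $\tfrac{1-\sqrt{13}}{2}$ is not excluded in the statement. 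Treat this as diagnosing a typo rather than as extra hypotheses: with the corrected set $\bigl\{1,\tfrac{1+\sqrt{13}}{2},\tfrac{1-\sqrt{13}}{2}\bigr\}$ your argument coincides with the paper's proof. (A further caveat affecting both you and the paper: the normalization $\lambda=\tfrac{1}{3(\delta+1)}$ additionally requires $\delta\neq-1$, which fails at $r=\tfrac{1\pm\sqrt{5}}{2}$; as your own general constraint shows, those rates are still attainable with a different $\lambda$, so this is a gap in the lemma as normalized, not in your reasoning relative to it.)
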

\begin{proof}
    With $A=0$, $B=I$, and fixed step-size $\lambda$, \eqref{GFRB} can be written as
    \begin{equation}\label{general roc 1}
        x_{k+1}=(1-\lambda(\delta+2))x_k+\lambda(2\delta+1)x_{k-1}-\lambda\delta x_{k-2}.
    \end{equation}
 For the particular choice $\lambda=\tfrac{1}{3(\delta+1)}$, \eqref{general roc 1} becomes
    \begin{equation}\label{general roc 2}
        x_{k+1}= \frac{2\delta+1}{3(\delta+1)}x_k+\frac{2\delta+1}{3(\delta+1)}x_{k-1}-\frac{\delta}{3(\delta+1)}x_{k-2}.
    \end{equation}
Since we want to show that $x_{k+1}=\frac{c}{r^k}$ for some $c>0$, for all $k\geq 1$, from \eqref{general roc 2}, we get
\begin{equation}\label{general roc 3}
\begin{split}
        \frac{1}{r^k}&=\frac{2\delta+1}{3(\delta+1)}\frac{1}{r^{k-1}}+\frac{2\delta+1}{3(\delta+1)}\frac{1}{r^{k-2}}-\frac{\delta}{3(\delta+1)}\frac{1}{r^{k-3}}\\ 
        \implies \frac{1}{r^3}&=\frac{2\delta+1}{3(\delta+1)}\frac{1}{r^2}+\frac{2\delta+1}{3(\delta+1)}\frac{1}{r}-\frac{\delta}{3(\delta+1)}.
\end{split}
\end{equation}
Simplifying \eqref{general roc 3}, we obtain $(3\delta+1)-(2\delta+1)r-(2\delta+1)r^2+\delta r^3=0$. This gives us the solution $\delta=\frac{r^2+r-3}{r^3-2r^2-2r+3}$. The denominator of $\delta$ factors as $(r-1)\bigl(r^2-r-3\bigr)$, so the excluded values are $r\in\{1,\tfrac{1-\sqrt{13}}{2},\tfrac{1+\sqrt{13}}{2}\}$. Hence, for any $r$ not in this set, choosing $\delta$ as above in~\eqref{general roc 2} gives a linear convergence rate $\frac{1}{r}$.
\end{proof}
\begin{figure}[htbp]
  \centering  

  \begin{subfigure}[b]{0.32\textwidth}
    \centering
    \includegraphics[width=\textwidth]{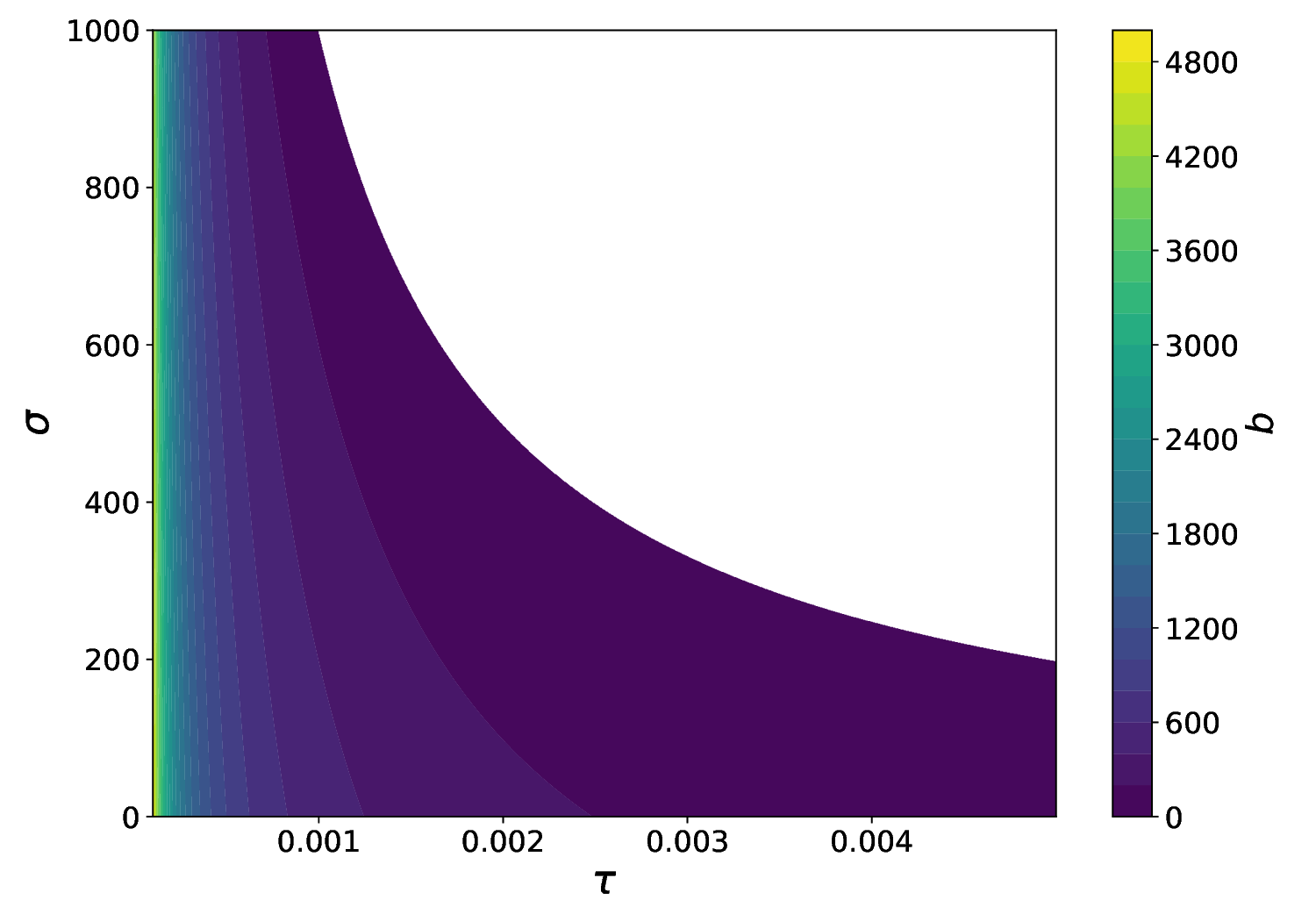}
    \caption{$\alpha=0.001$}
  \end{subfigure}\hfill
  \begin{subfigure}[b]{0.32\textwidth}
    \centering
    \includegraphics[width=\textwidth]{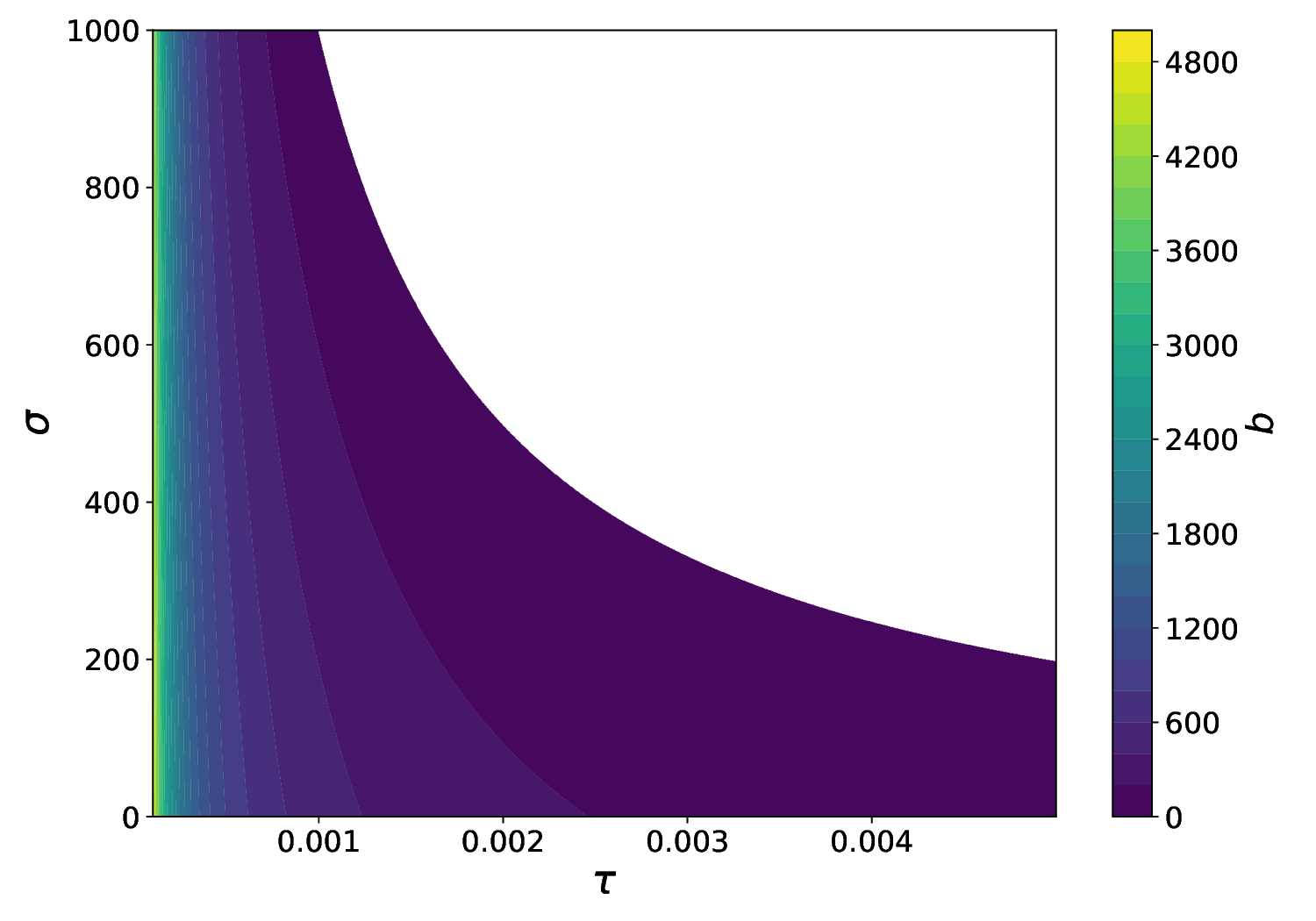}
    \caption{$\alpha=0.01$}
  \end{subfigure}\hfill
  \begin{subfigure}[b]{0.32\textwidth}
    \centering
    \includegraphics[width=\textwidth]{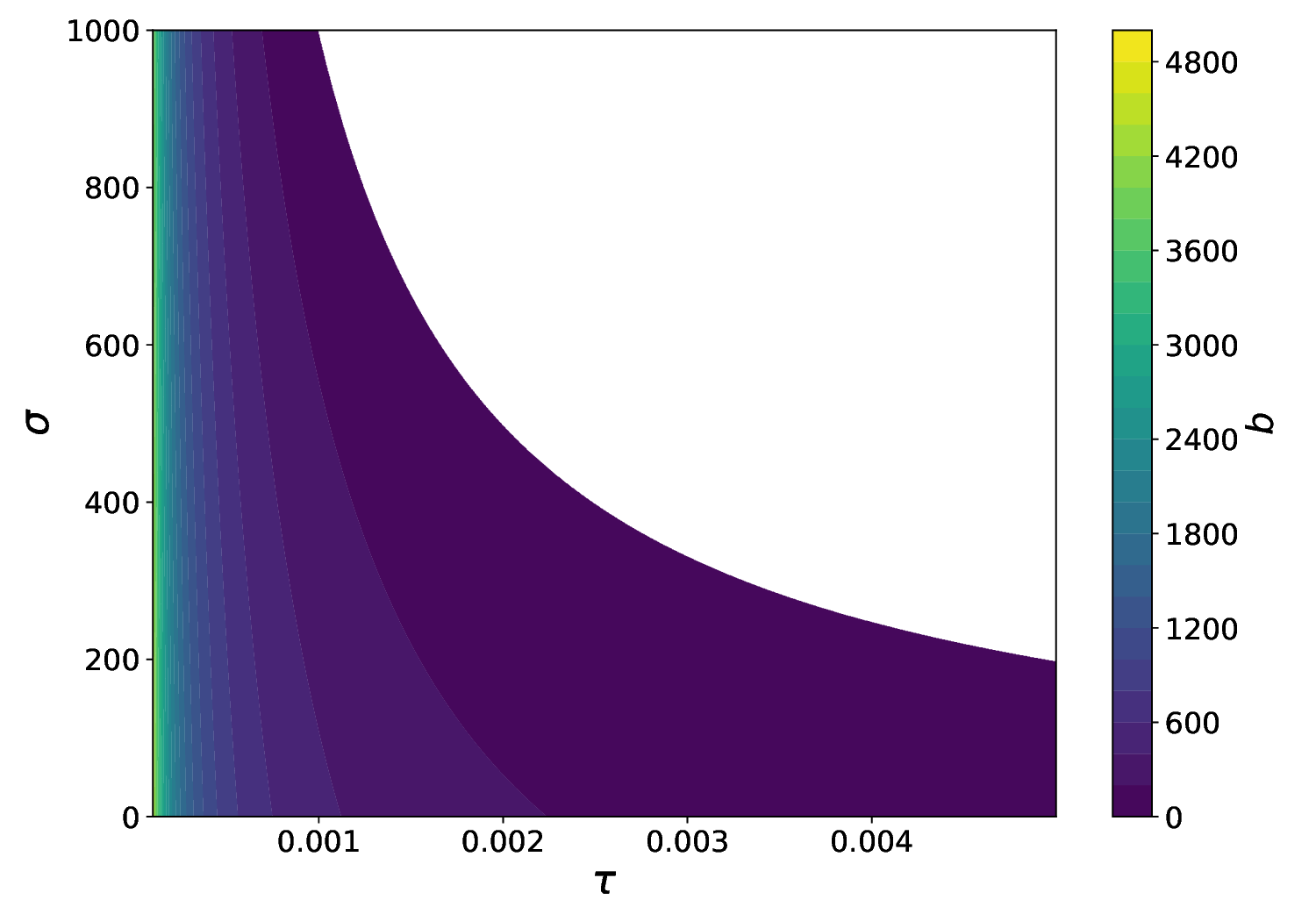}
    \caption{$\alpha=0.1$}
  \end{subfigure}

  \vspace{0.8em}

  \begin{subfigure}[b]{0.32\textwidth}
    \centering
    \includegraphics[width=\textwidth]{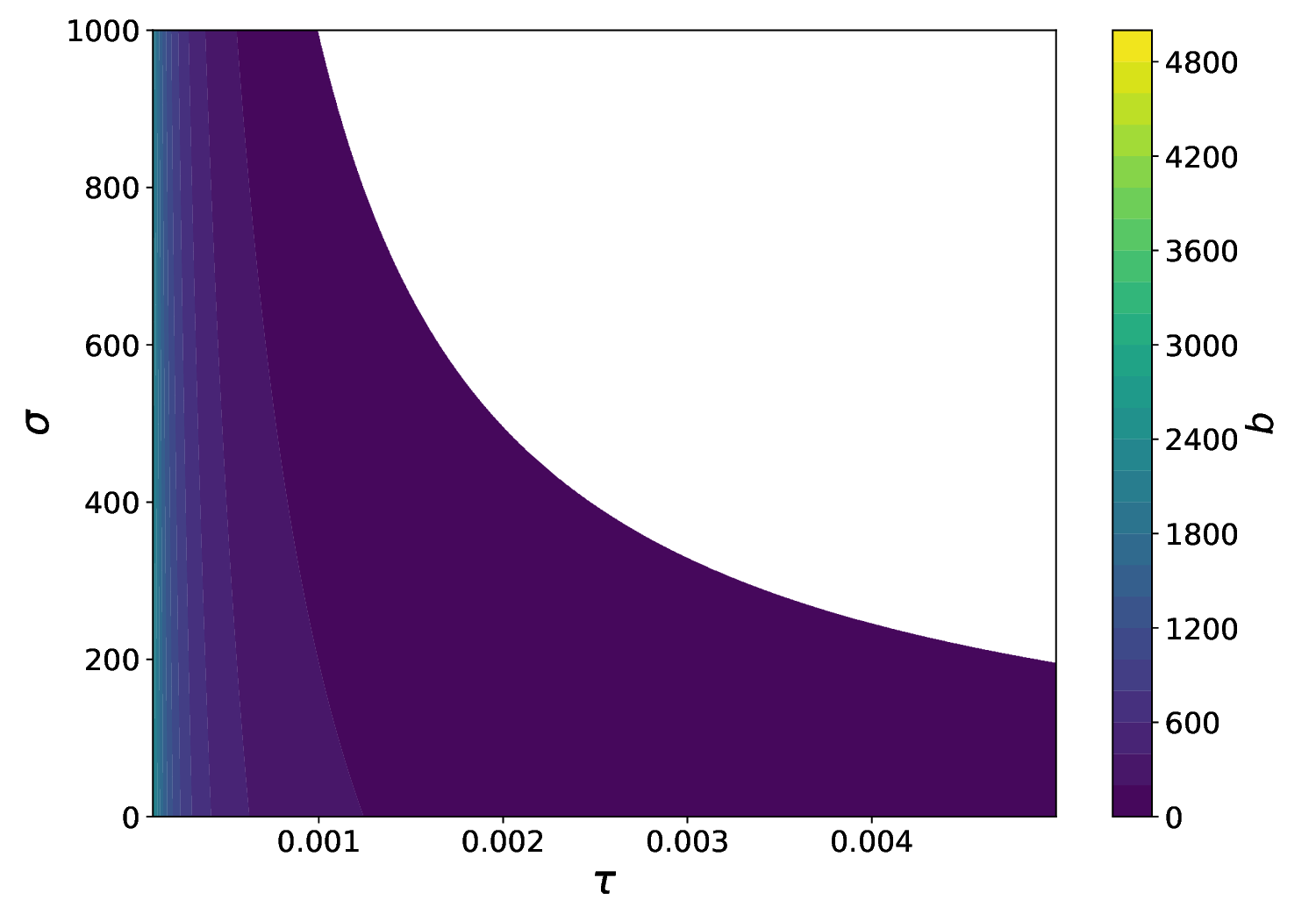}
    \caption{$\alpha=0.5$}
  \end{subfigure}\hfill
  \begin{subfigure}[b]{0.32\textwidth}
    \centering
    \includegraphics[width=\textwidth]{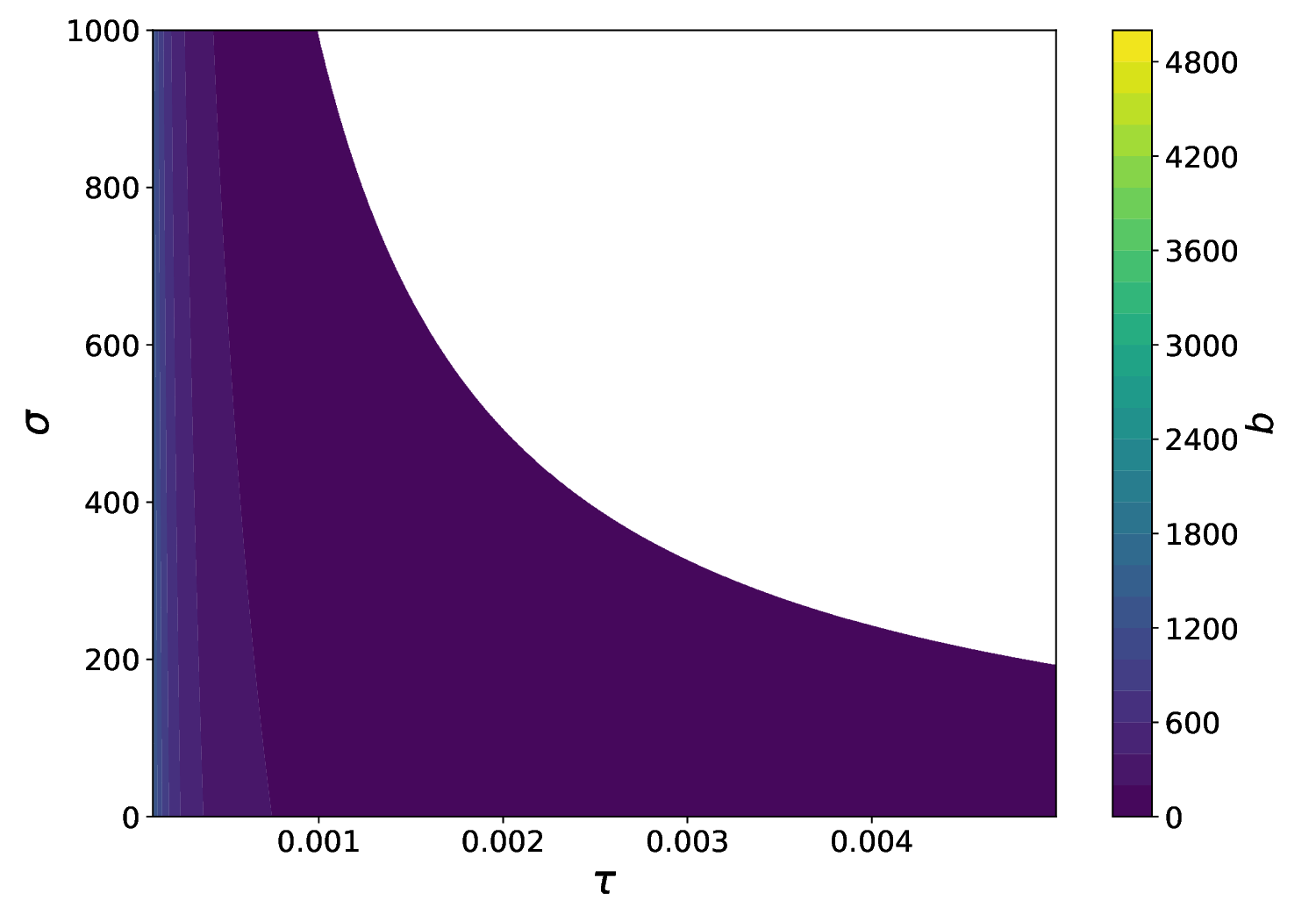}
    \caption{$\alpha=0.7$}
  \end{subfigure}\hfill
  \begin{subfigure}[b]{0.32\textwidth}
    \centering
    \includegraphics[width=\textwidth]{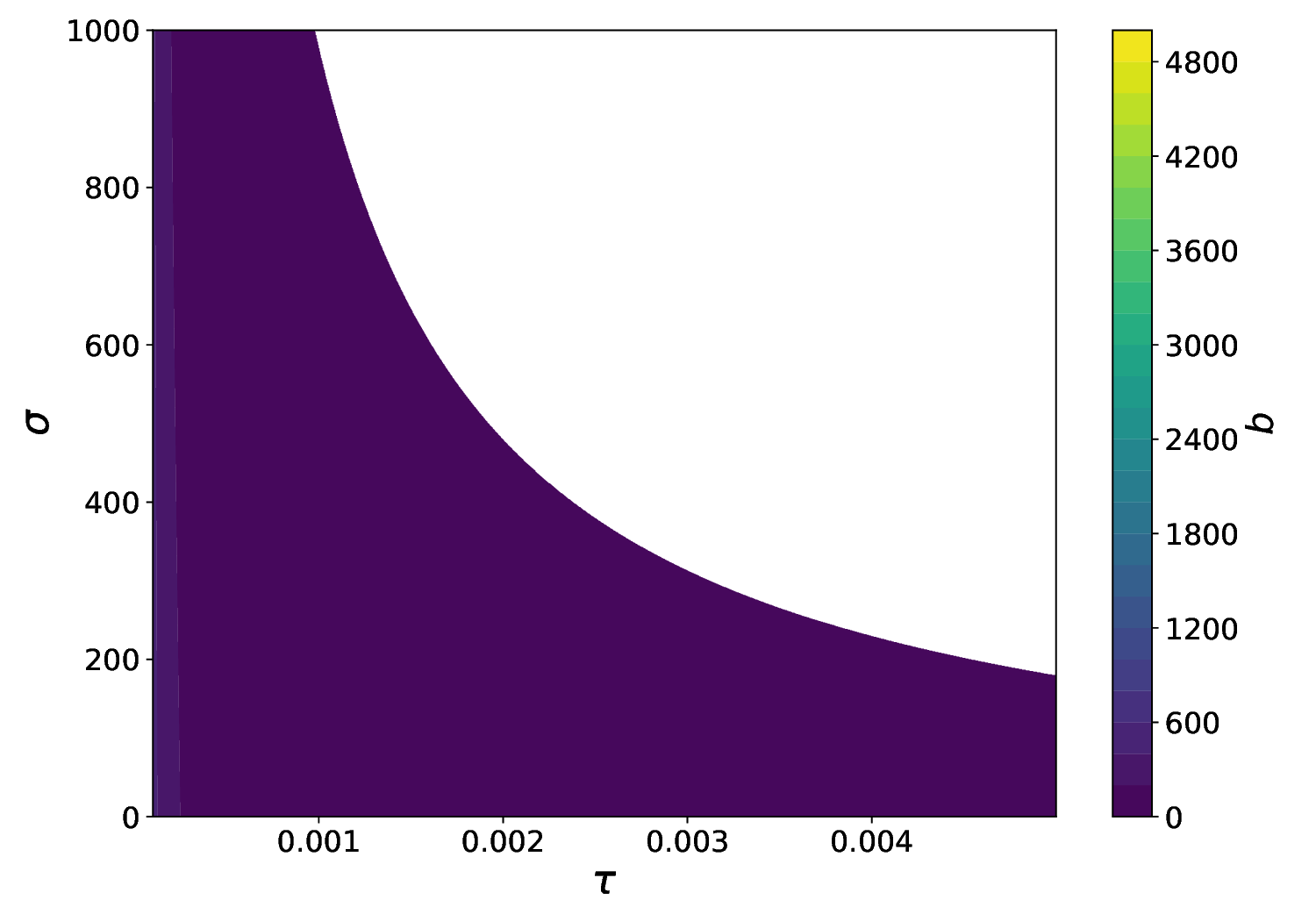}
    \caption{$\alpha=0.9$}
  \end{subfigure}

\caption{Admissible $(\tau,\sigma)$ regions for Algorithm~\ref{alg:EPDTR_splitting} across six values of $\alpha$, obtained by plotting $b(\tau,\sigma;\alpha)$.}
  \label{fig:admissible_regions}
\end{figure}

\section{An Extended Primal-Dual Twice Reflected Algorithm}
Let $\mathbb{H}_1$ and $\mathbb{H}_2$ be finite-dimensional real Hilbert spaces. We consider the following primal monotone inclusion problem: find $x \in \mathbb{H}_1$ such that
\begin{equation}\label{primal three operator inlcusion}
    0\in (A+B+K^*CK)(x),
\end{equation}
where $A:\mathbb{H}_1\rightrightarrows \mathbb{H}_1$ and $C:\mathbb{H}_2\rightrightarrows \mathbb{H}_2$ are maximally monotone operators, $B:\mathbb{H}_1\rightrightarrows \mathbb{H}_1$ is monotone and $L$-Lipschitz, and $K:\mathbb{H}_1\rightrightarrows \mathbb{H}_2$ is a linear operator with adjoint $K^*$. Problems of the form \eqref{primal three operator inlcusion} have been studied in the literature; see, for example \cite{condat2013primal, combettes2012primal, rieger2020backward}. Recently, Soe et al. \cite{soe2025golden} addressed the setting in which the Lipschitz constant of $B$ is either unknown or only locally available. In this section, we propose an algorithm to solve \eqref{primal three operator inlcusion} which can be seen as an extension of the \emph{primal-dual twice reflected algorithm} (PDTR) proposed in \cite[Section 3]{malitsky2023first}. The associated dual monotone inclusion, as considered in \cite{attouch1996general, malitsky2023first}, is to find $y \in \mathbb{H}_2$ such that
\begin{equation}\label{dual three operator inlcusion}
    0\in -K(A+B)^{-1}(K^*y)+ C^{-1}(y).
\end{equation}
Notice that both the primal \eqref{primal three operator inlcusion} and dual \eqref{dual three operator inlcusion} inclusion problem can be expressed as 
\begin{equation}\label{primal-dual system expanded}
\begin{cases}
0 \in A x + Bx + K^* y, \\
0 \in -Kx + C^{-1} y,
\end{cases}
\quad (x,y) \in \mathbb{H}_1 \times \mathbb{H}_2.
\end{equation}
Equivalently, this can be written as the following primal-dual system,
\begin{equation}\label{primal-dual system compact}
0 \in 
\begin{bmatrix}
A & K^* \\
- K & C^{-1}
\end{bmatrix}
\begin{bmatrix}
x \\ y
\end{bmatrix}
+
\begin{bmatrix}
B & 0 \\
0 & 0
\end{bmatrix}
\begin{bmatrix}
x \\ y
\end{bmatrix}.
\end{equation}

We now have the following primal--dual algorithm to solve both the primal \eqref{primal three operator inlcusion} and dual~\eqref{dual three operator inlcusion} inclusion problems simultaneously.
\begin{algorithm}[H]
\caption{Extended Primal--Dual Twice‐Reflected Algorithm (EPDTR) for \eqref{primal three operator inlcusion}}
\label{alg:EPDTR_splitting}
\begin{algorithmic}[1]\setlength{\itemsep}{0pt}\setlength{\parskip}{0pt}
\State \textbf{Initialize:} Let $x_{-2},x_{-1},x_{0}\in\mathbb{H}_1$, $y_{-1},y_{0}\in\mathbb{H}_2$,
$\alpha\in[0,1)$, and $b\in\mathbb{R}$. Choose $\tau,\sigma>0$ such that 
$2\tau(1+|b|)L + (1-\alpha)\tau\sigma\|K\|^2<1-\alpha.$
\For{$k=0,1,2,\dots$}
  \Statex
  \begin{subequations}\label{eq:EPDTR_updates}
    \begin{align}
      x_{k+1}
      &=J_{\tau A}\Bigl((1-\alpha)x_k+\alpha x_{k-1}-\tau K^*y_k-(b+2)\tau Bx_k
        +(2b+1)\tau Bx_{k-1}-b\tau Bx_{k-2}\Bigr),\nonumber\\
      y_{k+1}
      &=J_{\sigma C^{-1}}\Bigl((1-\alpha)y_k+\alpha y_{k-1}+2\sigma Kx_{k+1}
      -\sigma K\bigl((1-\alpha)x_k+\alpha x_{k-1}\bigr)\Bigr).
      \label{eq:EPDTR_y}
    \end{align}
  \end{subequations}
\EndFor
\end{algorithmic}
\end{algorithm}

Prior to discussing the convergence of Algorithm~\ref{alg:EPDTR_splitting}, some comments are in order.

\begin{remark}
Plugging $b=\alpha=0$ into Algorithm~\ref{alg:EPDTR_splitting} yields the PDTR scheme of Malitsky and Tam~\cite{malitsky2023first}:
\[
  \begin{cases}
    x_{k+1}
     =J_{\tau A}\bigl(x_k-\tau K^*y_k-2\tau Bx_k+\tau Bx_{k-1}\bigr),\\
    y_{k+1}
     =J_{\sigma C^{-1}}\bigl(y_k+2\sigma Kx_{k+1}-\sigma Kx_k\bigr).
  \end{cases}
\]
Moreover, the step-size condition of Algorithm~\ref{alg:EPDTR_splitting} reduces to $2\tau L+\tau\sigma\|K\|^2<1$, which is the step-size condition for PDTR.
\end{remark}

\begin{remark}
From the step-size condition of Algorithm \ref{alg:EPDTR_splitting}, one can notice that
\[
2\tau L + (1-\alpha)\tau\sigma\|K\|^2 < 1-\alpha-2\tau|b|L.
\]
Thus, for fixed $L$ and $\|K\|$, increasing $b$ tightens the admissible region for $(\tau,\sigma)$. This shrinkage is illustrated in Figure \ref{fig:admissible_regions} with $L = K = 1$ as an example.
\end{remark}
\subsection{Derivation as a GFRB and Convergence}
Before proving convergence, recall that PDTR can be expressed as a forward--reflected--backward method with suitable metrics \cite{malitsky2023first}. We now show that Algorithm~\ref{alg:EPDTR_splitting} can be written as an \emph{generalized forward--reflected--backward} (GFRB) scheme~\cite{dung2025generalization}. To do so, we consider the following operators, which were considered in \cite{malitsky2023first} to analyze PDTR.

Set $\mathbb{H}:=\mathbb{H}_1\times\mathbb{H}_2$ and $z_k:=(x_k,y_k)$. Define
\[
G:=
\begin{bmatrix}
A & K^*\\
- K & C^{-1}
\end{bmatrix},
\qquad
F:=
\begin{bmatrix}
B & 0\\
0 & 0
\end{bmatrix},
\qquad
M:=
\begin{bmatrix}
\frac{1}{\tau}I & -K^*\\
- K & \frac{1}{\sigma}I
\end{bmatrix},
\]
which is self-adjoint and positive definite whenever $\tau\sigma\|K\|^2<1$. Now, by applying the first-order optimality conditions to \eqref{eq:EPDTR_updates}, we obtain
\begin{align}
\frac{1}{\tau}&\bigl((1-\alpha)x_k+\alpha x_{k-1}\bigr)-K^*y_k
-(b+2)Bx_k+(2b+1)Bx_{k-1}-bBx_{k-2}
\in \left(A+\frac{1}{\tau}I\right)(x_{k+1}),
\nonumber\\&
\frac{1}{\sigma}\bigl((1-\alpha)y_k+\alpha y_{k-1}\bigr)-K\bigl((1-\alpha)x_k+\alpha x_{k-1}\bigr)
\in -2Kx_{k+1}+\left(C^{-1}+\frac{1}{\sigma}I\right)y_{k+1}.
\label{eq:EPDTR_optimality}
\end{align}
Let $\bar z_k:=(1-\alpha)z_k+\alpha z_{k-1}$. Then we can rewrite \eqref{eq:EPDTR_optimality} equivalently as
\begin{equation*}
M\bar z_k-(b+2)F(z_k)+(2b+1)F(z_{k-1})-bF(z_{k-2})
\in (M+G)(z_{k+1}).
\end{equation*}
Therefore, by the definition of the resolvent,
\begin{equation}\label{eq:EPDTR_as_inertial_GFRB}
z_{k+1}
=
J_{M^{-1}G}\Bigl(
\bar z_k
-(b+2)\,M^{-1}F(z_k)
+(2b+1)\,M^{-1}F(z_{k-1})
-b\,M^{-1}F(z_{k-2})
\Bigr),
\end{equation}
which is precisely a GFRB iteration on $\mathbb{H}$. To conclude the proof of the convergence of Algorithm \ref{alg:EPDTR_splitting}, we prove the following result.

\begin{theorem}\label{thm:EPDTR_convergence}
Let $\{(x_k,y_k)\}$ be the sequence generated by Algorithm~\ref{alg:EPDTR_splitting}.  
Suppose that the solution set of \eqref{primal three operator inlcusion} is nonempty and the parameters $\tau,\sigma>0$ satisfy
$2\tau(1+|b|)L + (1-\alpha)\tau\sigma\|K\|^2 < 1-\alpha.$
Then $\{(x_k, y_k)\}$ converges to a solution of \eqref{primal-dual system compact}.
\end{theorem}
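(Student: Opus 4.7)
The plan is to invoke the fixed-step-size convergence theorem for generalized FRB (stated in the introduction and cited via~\cite{dung2025generalization}) in the preconditioned setting already set up immediately before the theorem. By~\eqref{GFRB from PDTR}, Algorithm~\ref{alg:EPDTR_splitting} coincides with~\eqref{GFRB} applied with step-size $\lambda=1$ and reflection parameter $\delta=b$ to the two-operator inclusion $0\in(M^{-1}G+M^{-1}F)(z)$ on the product space $\mathbb{H}_1\times\mathbb{H}_2$; zeros of $M^{-1}G+M^{-1}F$ are precisely zeros of $G+F$, i.e.\ solutions of~\eqref{primal-dual system compact}. It therefore suffices to check that, in a suitable Hilbert structure on the product space, $M^{-1}G$ is maximally monotone, $M^{-1}F$ is monotone and Lipschitz, and $\lambda=1$ is admissible.

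To set up that structure I would endow $\mathbb{H}_1\times\mathbb{H}_2$ with the inner product induced by the symmetric positive-definite part $M_s:=\operatorname{diag}(\tau^{-1}I,\sigma^{-1}I)$ of $M$, and decompose $M=M_s+M_a$ with $M_a$ skew-adjoint. Standard preconditioning identities convert the resolvent $J_{M^{-1}G}$ into the $M_s$-resolvent of $G+M_a$; since $G+M_a$ is the sum of a maximally monotone operator and a bounded linear skew map, it is maximally monotone, and hence $M^{-1}G$ is maximally monotone in the $M_s$-metric. Monotonicity of $M^{-1}F$ in the same metric follows from the block-diagonal structure of $F$ and the monotonicity of $B$. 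The key quantitative step is a Lipschitz estimate for $M^{-1}F$ in the $M_s$-norm: a direct computation using the explicit form of $M^{-1}$ (whose boundedness is guaranteed by $\tau\sigma\|K\|^2<1$), together with the $L$-Lipschitz property of $B$ and the operator norm of $K$, should yield an effective Lipschitz constant $L_{\mathrm{eff}}$ for which the GFRB admissibility condition $1<\tfrac{1}{2L_{\mathrm{eff}}(1+|b|)}$ simplifies exactly to the hypothesis $2\tau(1+|b|)L+\tau\sigma\|K\|^2<1$.

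With these three ingredients in hand, the cited GFRB convergence theorem gives weak convergence of $z_k=(x_k,y_k)$ to a zero of $M^{-1}G+M^{-1}F$, which by the equivalence above is a primal-dual solution of~\eqref{primal-dual system compact}; since $\mathbb{H}_1$ and $\mathbb{H}_2$ are finite-dimensional, weak convergence is strong convergence. The main technical obstacle is the sharp Lipschitz bound of $M^{-1}F$ in the $M_s$-norm: one must carefully track the $K^{*}$-coupling produced by $M^{-1}$ and invoke $\tau\sigma\|K\|^{2}<1$ at the correct point in the estimate, so that the preconditioned step-size requirement collapses exactly onto the stated inequality rather than a loose weakening of it.
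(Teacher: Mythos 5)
Your high-level plan coincides with the paper's: read Algorithm~\ref{alg:EPDTR_splitting}, via \eqref{PDTR_derivation_2} and \eqref{GFRB from PDTR}, as GFRB with unit step-size and parameter $b$ applied to $0\in(M^{-1}G+M^{-1}F)(z)$, verify the GFRB hypotheses in a preconditioned metric, and invoke \cite[Theorem~3.1]{dung2025generalization}. The paper, however, works directly in the inner product induced by $M$ itself: despite the sign in the displayed matrix, the $M$ actually used in \eqref{PDTR_derivation_1}--\eqref{PDTR_derivation_2} has $-K$ in its lower-left block (this is what makes the second line of \eqref{PDTR_derivation_1} produce the term $-2Kx_{k+1}$), so $M$ is self-adjoint and positive definite precisely when $\tau\sigma\|K\|^2<1$. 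In the $M$-metric, \cite[Lemma~3.2]{malitsky2023first} gives at once that $M^{-1}G$ is maximally monotone and $M^{-1}F$ is monotone and Lipschitz with constant $L_M=\frac{\tau L}{1-\tau\sigma\|K\|^2}$, and the hypothesis $2\tau(1+|b|)L+\tau\sigma\|K\|^2<1$ is literally $1<\frac{1}{2(1+|b|)L_M}$, so the GFRB theorem applies and the limit is a zero of $G+F$, i.e.\ a solution of \eqref{primal-dual system compact}. There is no skew remainder to dispose of.

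Your $M_s$-route has concrete failures. First, the resolvent conversion is not what you claim: $J_{M^{-1}G}(w)=\bigl(M_s+G+M_a\bigr)^{-1}(Mw)$, which is the $M_s$-resolvent of $G+M_a$ evaluated at $w+M_s^{-1}M_a w$, not at $w$; rewriting \eqref{PDTR_derivation_2} in the $M_s$-metric therefore leaves an uncancelled term $M_s^{-1}M_a z_k$ in the resolvent argument, so the scheme is no longer of GFRB form for the pair $\bigl(M_s^{-1}(G+M_a),\,M_s^{-1}F\bigr)$. Second, even granting the form, folding $M_a$ into the backward operator changes the problem: zeros of $G+M_a+F$ are not zeros of $G+F$, so the limit would not solve \eqref{primal-dual system compact}, and compensating for $M_a$ inside the forward terms destroys the GFRB coefficient pattern $-(b+2),\,(2b+1),\,-b$. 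Third, the quantitative claims do not come out in the $M_s$-metric: monotonicity of $M^{-1}F$ there amounts to $\langle Fz-Fz',\,M^{-1}M_s(z-z')\rangle\ge 0$, which does not follow from the block-diagonal structure of $F$ (it is immediate only in the $M$-metric, where this factor is the identity), and the Lipschitz constant of $M^{-1}F$ measured in the $M_s$-norm will not reproduce $\frac{\tau L}{1-\tau\sigma\|K\|^2}$, so the admissibility condition for unit step-size will not collapse exactly onto $2\tau(1+|b|)L+\tau\sigma\|K\|^2<1$. The repair is simply to abandon the $M_s$/$M_a$ decomposition, note that $M$ (with the corrected sign) is itself a valid inner product under $\tau\sigma\|K\|^2<1$, and argue in that metric as the paper does.
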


\begin{proof}
We need only to show that \eqref{eq:EPDTR_as_inertial_GFRB} is an instance of the GFRB method of \cite{dung2025generalization}
and satisfies all its assumptions. Set $\mathbb{H}:=\mathbb{H}_1\times\mathbb{H}_2$ and define
\[
\tilde G:=M^{-1}G,
\qquad
\tilde F:=M^{-1}F.
\]
By \cite[Lemma~3.2]{malitsky2023first}, $\tilde G$ is maximally monotone on $\mathbb{H}$, and $\tilde F$ is monotone and $L_M$-Lipschitz with
\[
L_M=\frac{\tau L}{1-\tau\sigma\|K\|^2}.
\]
Moreover, \eqref{eq:EPDTR_as_inertial_GFRB} is exactly the  GFRB iteration of \cite{dung2025generalization} on $\mathbb{H}$
with unit step-size applied to $0\in(\tilde G+\tilde F)(z)$. Finally, the condition $2\tau(1+|b|)L+(1-\alpha)\tau\sigma\|K\|^2<1-\alpha$ is equivalent to
\[
\alpha+\frac{2\tau(1+|b|)L}{1-\tau\sigma\|K\|^2}<1,
\quad\text{i.e.,}\quad
1<\frac{1-\alpha}{2(|b|+1)L_M},
\]
which is precisely the step-size condition required for GFRB in \cite[Theorem~3.1]{dung2025generalization}. Therefore, by \cite[Theorem~3.1]{dung2025generalization},
$(z_k)$ converges to some $\bar z\in(\tilde G+\tilde F)^{-1}(0)$. Since $M$ is invertible, $(\tilde G+\tilde F)^{-1}(0)=(G+F)^{-1}(0)$, and hence
$\bar z=(\bar x,\bar y)$ solves \eqref{primal-dual system compact}. Thus, $(x_k,y_k)\to(\bar x,\bar y)$, and the proof is complete.
\end{proof}


\section{Numerical Experiments}
In this section, we present numerical experiments to evaluate the performance of Algorithm~\ref{alg:forb_incr}, and compare it with the following methods: forward-reflected-backward (FRB) with linesearch \cite[Algorithm~1]{malitsky2020forward}, modified FRB \cite[Algorithm~3.1]{hieu2021modified}, and the perturbed reflected-forward-backward (RFB) method \cite[Algorithm~1]{tan2025perturbed}. For each algorithm, we plot the error, $\mathrm{err}_k = \|x_{k+1} - x_k\|$ versus the number of iterations and report the CPU time (in seconds) required to reach specified error thresholds.

All methods were implemented in Python~3.11 and executed in a Google Colab environment equipped with an Intel Xeon CPU @ 2.20~GHz and 12.7~GB RAM.

\begin{example}[Convex minimization problem]\label{example_1}
We consider the following convex minimization problem:
\begin{equation*}
        \min_{x \in \mathbb{R}^m} \; \frac{1}{2}\|x\|_2^2 + b^{\top}x + 3 + \|x\|_1,
\end{equation*}
where $b\in\mathbb{R}^m$. The above problem can be equivalently written as the monotone inclusion problem~\eqref{main} with $A = \partial \|x\|_1$ and $B = 2x+b$. 

\begin{figure}[htbp]
  \centering

  \begin{subfigure}[b]{0.32\textwidth}
    \centering
    \includegraphics[width=\linewidth]{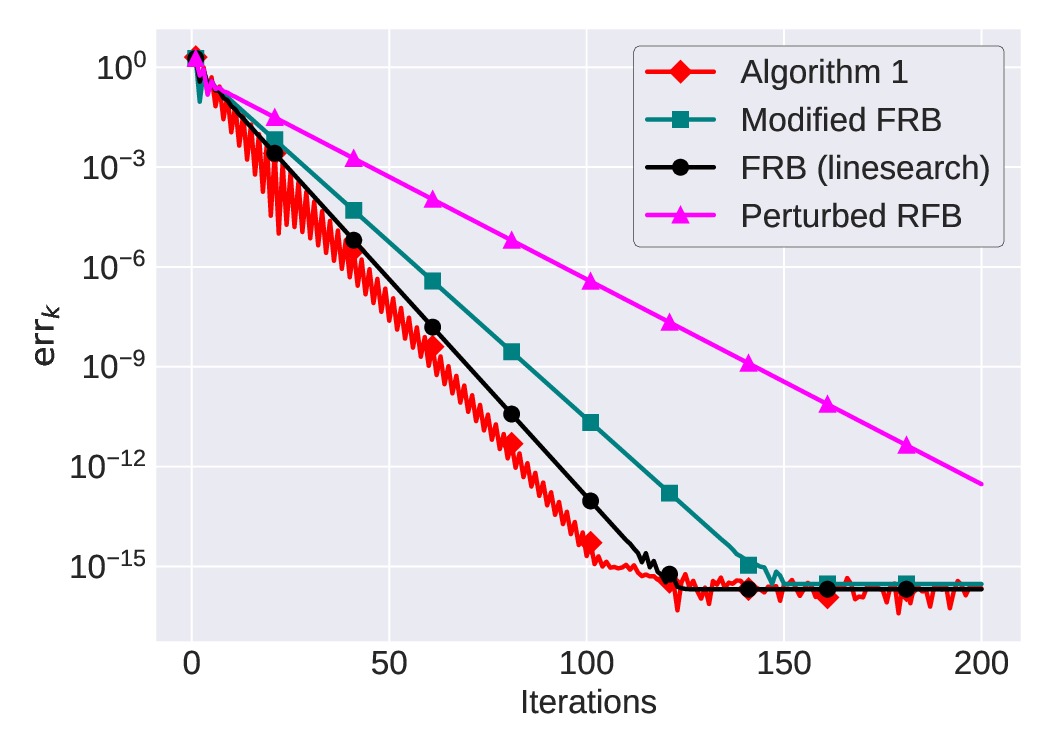}
    \caption{$m=500$}
    \label{fig:panel_a}
  \end{subfigure}\hfill
  \begin{subfigure}[b]{0.32\textwidth}
    \centering
    \includegraphics[width=\linewidth]{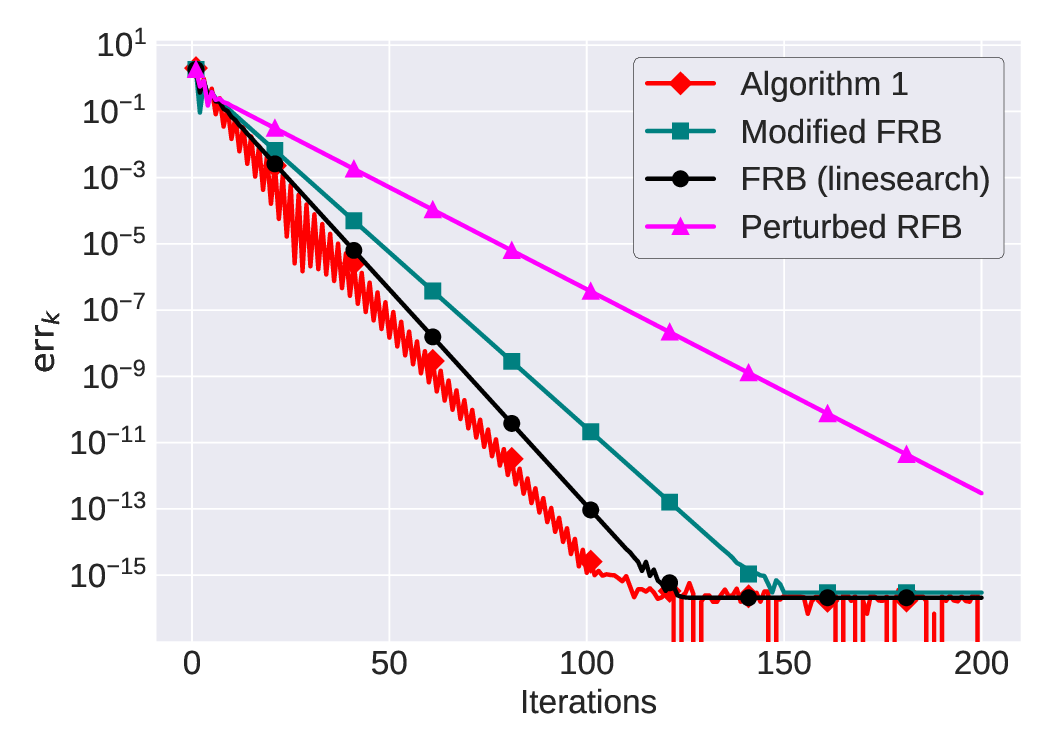}
    \caption{$m=500$}
    \label{fig:panel_b}
  \end{subfigure}\hfill
  \begin{subfigure}[b]{0.32\textwidth}
    \centering
    \includegraphics[width=\linewidth]{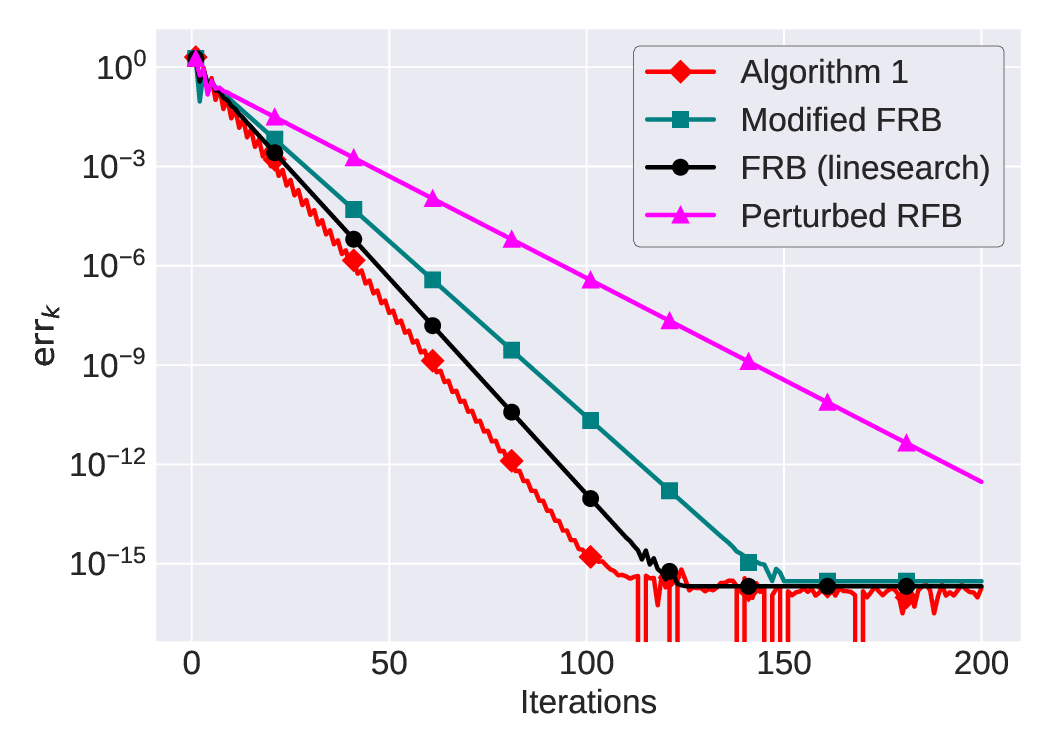}
    \caption{$m=500$}
    \label{fig:panel_c}
  \end{subfigure}

  \vspace{1em}

  \begin{subfigure}[b]{0.32\textwidth}
    \centering
    \includegraphics[width=\linewidth]{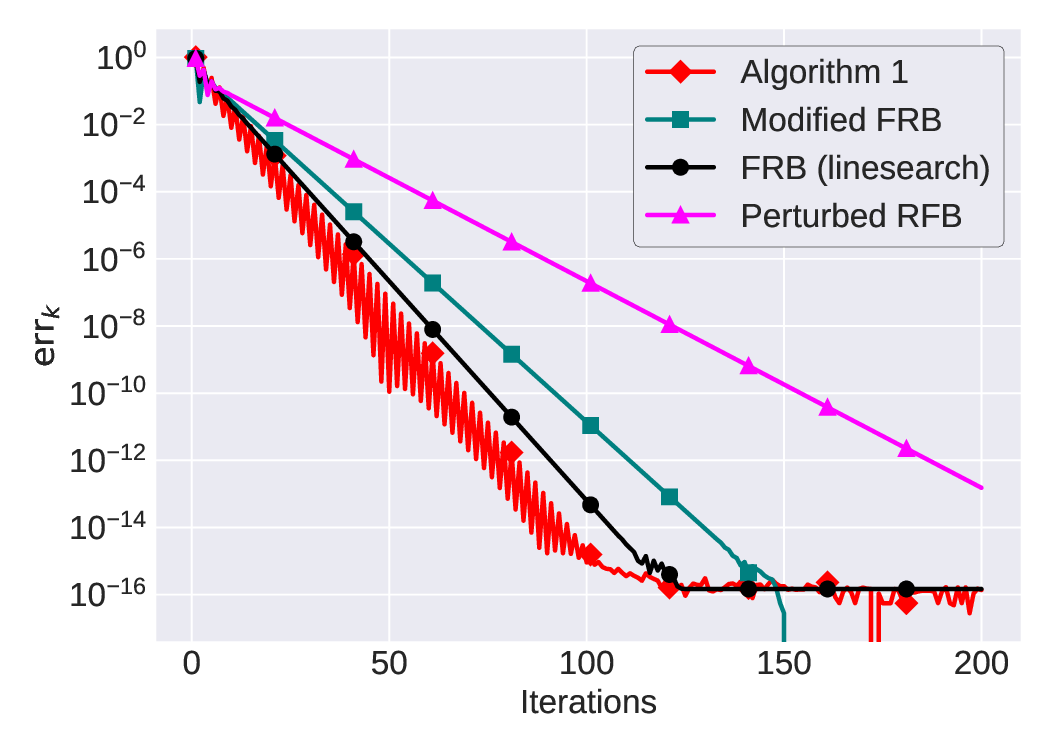}
    \caption{$m=200$}
    \label{fig:panel_d}
  \end{subfigure}\hfill
  \begin{subfigure}[b]{0.32\textwidth}
    \centering
    \includegraphics[width=\linewidth]{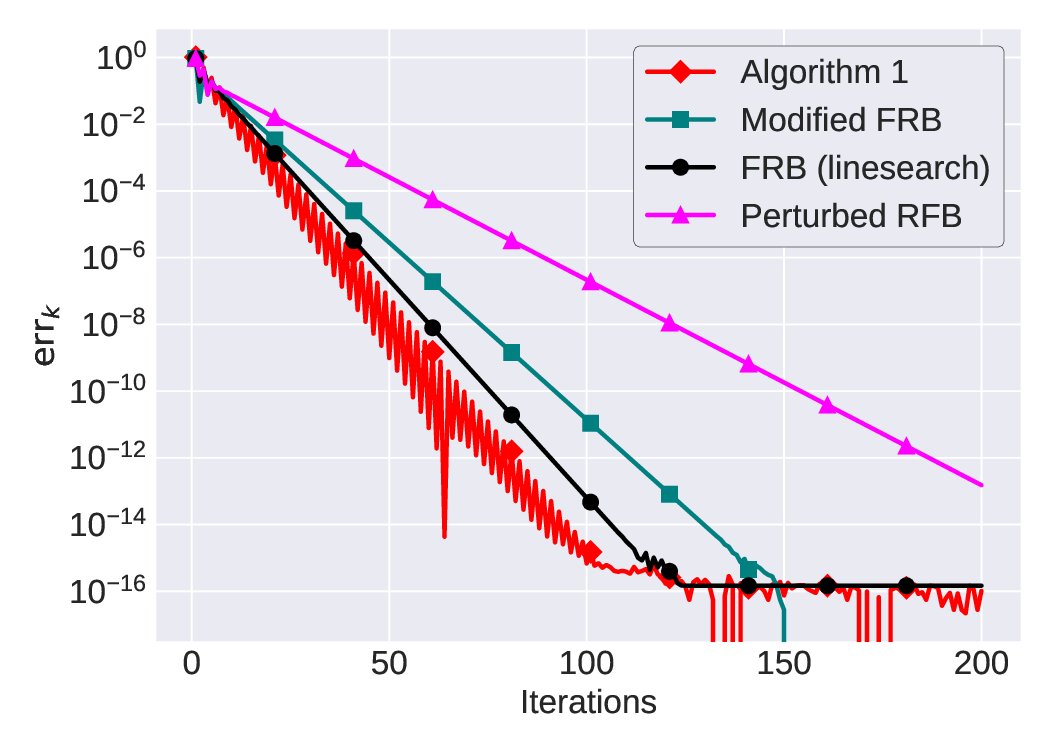}
    \caption{$m=200$}
    \label{fig:panel_e}
  \end{subfigure}\hfill
  \begin{subfigure}[b]{0.32\textwidth}
    \centering
    \includegraphics[width=\linewidth]{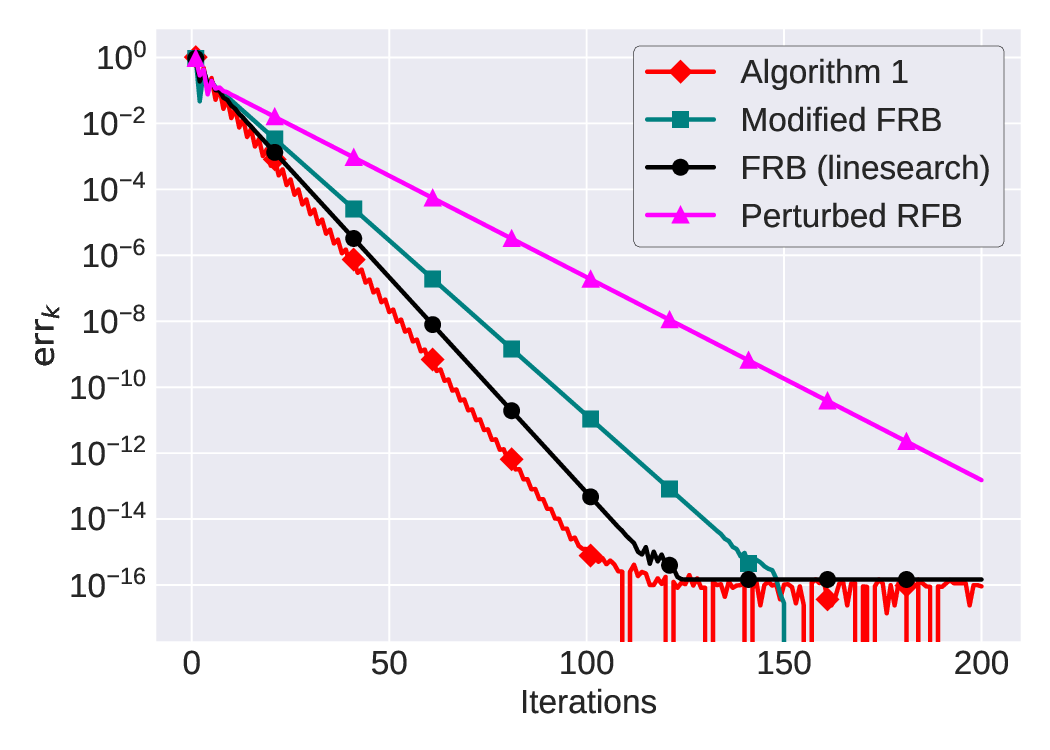}
    \caption{$m=200$}
    \label{fig:panel_f}
  \end{subfigure}

\caption{Comparison of $\mathrm{err}_k$ for Example~\ref{example_1} for $m\in\{200,500\}$. 
The parameter choices in Algorithm~\ref{alg:forb_incr} are selected as: for (a) and (d), \((\alpha,\delta)=(10^{-2},10^{-2})\); for (b) and (e), \((\alpha,\delta)=(0,10^{-2})\); for (c) and (f) \((\alpha,\delta)=(0,0)\).}
\label{fig:delta_and_comparison}
\end{figure}
Then the resolvent of $A $ is given by
$$
J_{\lambda A}(z) = \mathrm{prox}_{\lambda \|\cdot\|_1}(z) = \operatorname{soft}_{\lambda}(z),
$$
where the \textit{soft-thresholding operator} is defined componentwise as
$$
\left[\operatorname{soft}_{\lambda}(z)\right]_i = \begin{cases}
z_i - \lambda, & \text{if } z_i > \lambda, \\
0, & \text{if } |z_i| \leq \lambda, \\
z_i + \lambda, & \text{if } z_i < -\lambda.
\end{cases}
$$

\begin{table}[htbp]
\centering
\caption{Iteration counts and CPU time (in seconds) to reach tolerance \(10^{-7}\) for Example~\ref{example_1} with different dimensions \(m\). For Algorithm~1, we use \(\alpha=10^{-3},\delta=10^{-2}\).}
\label{tab:l1_benchmark_combined}
\begin{tabular}{lcccccc}
\toprule
 & \multicolumn{2}{c}{\(m=200\)} & \multicolumn{2}{c}{\(m=500\)} & \multicolumn{2}{c}{\(m=1000\)} \\
\cmidrule(lr){2-3}\cmidrule(lr){4-5}\cmidrule(lr){6-7}
Algorithm & Iter. & CPU (s) & Iter. & CPU (s) & Iter. & CPU (s) \\
\midrule
Algorithm~\ref{alg:forb_incr} & 42  & 0.002771 & 44  & 0.002949 & 46  & 0.003205 \\
Modified FRB                  & 64  & 0.002899 & 67  & 0.003330 & 68  & 0.002289 \\
FRB (linesearch)              & 53  & 0.004873 & 55  & 0.005342 & 56  & 0.003574 \\
Perturbed RFB                 & 100 & 0.006444 & 100 & 0.006678 & 100 & 0.004413 \\
\bottomrule
\end{tabular}
\end{table}

\begin{table}[htbp]
\centering
\caption{Iteration counts and CPU time (in seconds) to reach tolerance \(10^{-7}\) for Example~\ref{example_1} with different dimensions \(m\). For Algorithm~\ref{alg:forb_incr}, we use \(\alpha=0\) and \(\delta=10^{-2}\).}
\label{tab:l1_benchmark_delta1e-2_alpha0}
\begin{tabular}{lcccccc}
\toprule
 & \multicolumn{2}{c}{\(m=1500\)} & \multicolumn{2}{c}{\(m=2000\)} & \multicolumn{2}{c}{\(m=3000\)} \\
\cmidrule(lr){2-3}\cmidrule(lr){4-5}\cmidrule(lr){6-7}
Algorithm & Iter. & CPU (s) & Iter. & CPU (s) & Iter. & CPU (s) \\
\midrule
Algorithm~\ref{alg:forb_incr} & 46  & 0.004490 & 46  & 0.004939 & 48  & 0.006482 \\
Modified FRB                  & 69  & 0.004869 & 70  & 0.005288 & 70  & 0.006445 \\
FRB (linesearch)              & 57  & 0.007959 & 57  & 0.008465 & 58  & 0.010938 \\
Perturbed RFB                 & 114 & 0.010630 & 115 & 0.011794 & 116 & 0.014681 \\
\bottomrule
\end{tabular}
\end{table}

In this experiment, we initialize all the algorithms with $x_0=x_1=0$. Here, the elements of $b$ are drawn from $\mathcal{N}(0,1)$ and the others parameters are selected as follows. 

\begin{itemize}
    \item \textbf{Algorithm \ref{alg:forb_incr}}: $\varepsilon = 10^{-12}$, $\lambda_0=\lambda_{-1} =0.2$, $c_2= \frac{0.9(1-\varepsilon-\alpha)}{2|\delta|+2}$ for given $\alpha,\delta$, and $c_1=0.9c_2$, $\gamma_k =\frac{0.1}{k^{1.001}}$.\
    \item \textbf{Modified FRB} \cite{hieu2021modified}:  $\lambda_0=\lambda_{-1} =0.2$ and $\mu=0.4$ as defined in the algorithm.
    \item \textbf{FRB (linesearch)} \cite{malitsky2020forward}: $\lambda_0=\lambda_{-1} =0.2$ and $\delta=0.9, \sigma=0.7$ used inside the loop of the linesearch.
    \item \textbf{Perturbed RFB} \cite{tan2025perturbed}:  $\lambda_0=\lambda_{-1} =0.2$, $\mu=0.19$ and $\gamma_k =\frac{0.1}{k^{1.001}}$.
\end{itemize}

Figure~\ref{fig:delta_and_comparison} presents the results of this experiment, showing that Algorithm~\ref{alg:forb_incr} achieves a faster decrease of $\mathrm{err}_k$ with respect to the number of iterations, whereas Table~\ref{tab:l1_benchmark_combined} and \ref{tab:l1_benchmark_delta1e-2_alpha0} collects the CPU times required to reach a given error bound.
\end{example}

\begin{example}[Affine monotone inclusion]\label{example_2}
Fix $m\in\mathbb{N}$. Let $E\in\mathbb{R}^{m\times m}$ be symmetric and define
\[
A(x) := (E+\beta I)x,\qquad x\in\mathbb{R}^m,
\]
where $\beta\in\mathbb{R}$ is chosen so that $E+\beta I$ is positive semidefinite. Set
\[
B(x) := Mx + b,\qquad M := G^{\top} + S + D,\qquad b\in\mathbb{R}^m,
\]
with $G\in\mathbb{R}^{m\times m}$ chosen randomly, $S$ skew–symmetric, and $D=\operatorname{diag}(d_1,\dots,d_m)$ is positive- semi definite. 

Note that if we take $\beta \;=\; \max_{1\le i\le m} |\lambda_i(E)|$, then $E+\beta I$ is positive semidefinite, and hence $A$ is monotone. Moreover, $I+A = (1+\beta)I + E$  has eigenvalues $1+\beta+\lambda_i(E)\ge 1$, hence $I+A$ is onto. Therefore, by Minty’s theorem \cite[Theorem 21.1]{bauschke2011convex}, $A$ is maximally monotone.
\begin{figure}[htbp]
  \centering

  \begin{subfigure}[b]{0.32\textwidth}
    \centering
    \includegraphics[width=\linewidth]{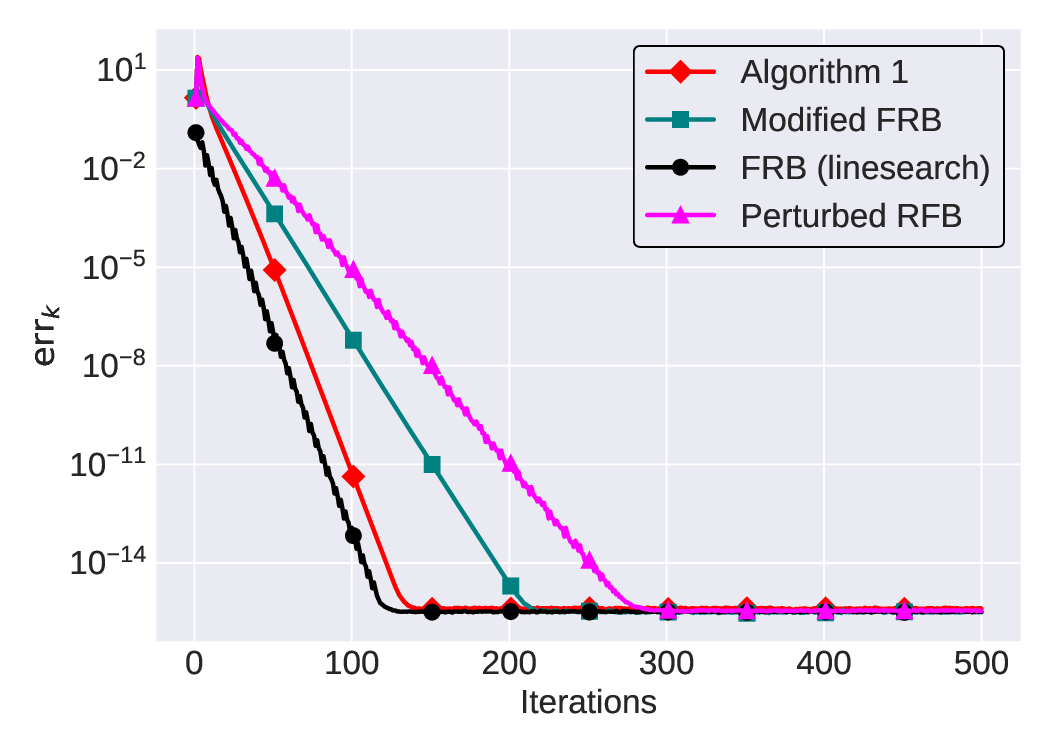}
    \caption{$m=1000$}
    \label{fig:ex2_delta_m300}
  \end{subfigure}\hfill
  \begin{subfigure}[b]{0.32\textwidth}
    \centering
    \includegraphics[width=\linewidth]{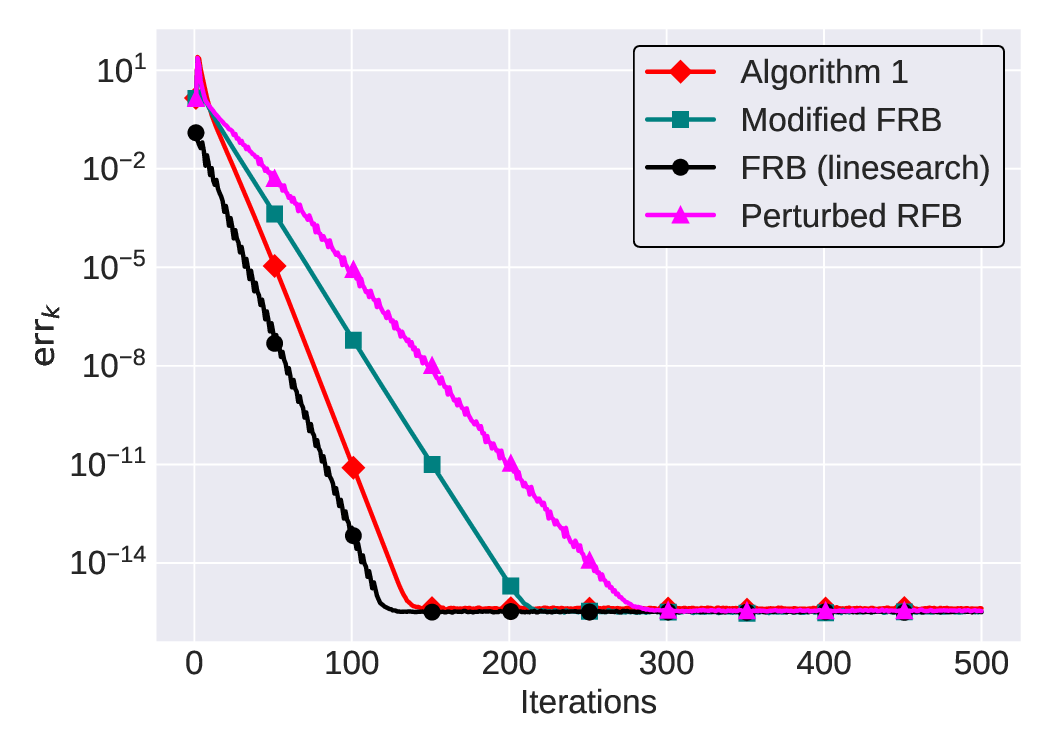}
    \caption{$m=1000$}
    \label{fig:ex2_delta_m500}
  \end{subfigure}\hfill
  \begin{subfigure}[b]{0.32\textwidth}
    \centering
    \includegraphics[width=\linewidth]{example_2_four_algorithms_m_2000_delta_.001_alpha_.01}
    \caption{$m=1000$}
    \label{fig:ex2_delta_m1000}
  \end{subfigure}

  \vspace{1em}

  \begin{subfigure}[b]{0.32\textwidth}
    \centering
    \includegraphics[width=\linewidth]{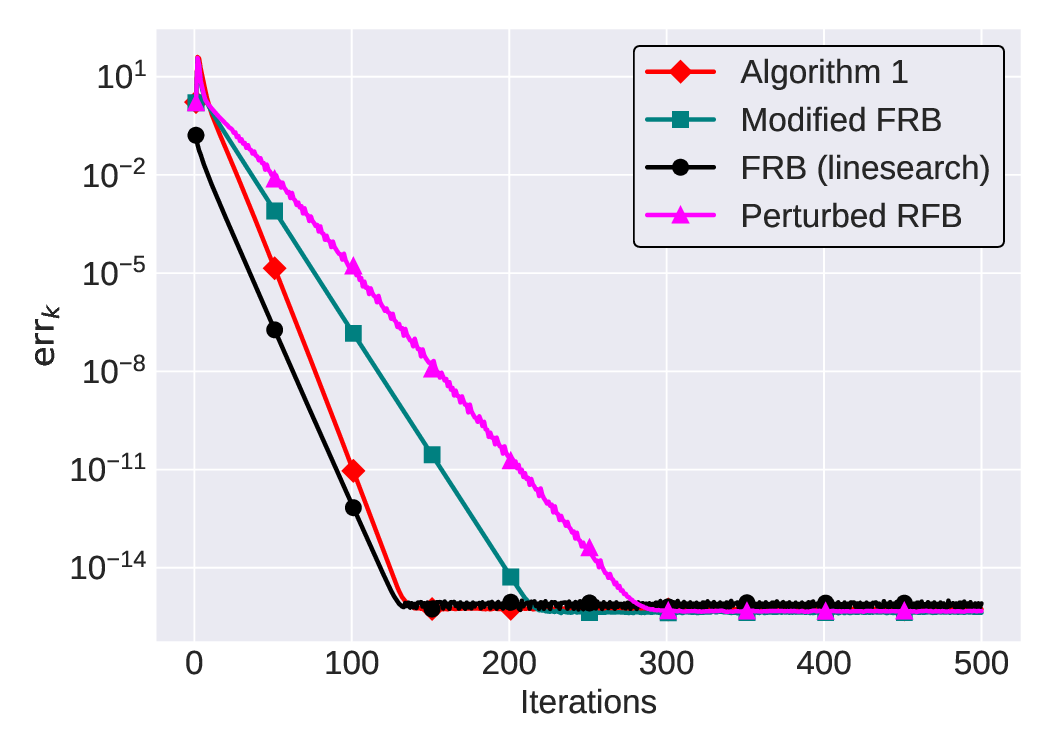}
    \caption{$m=2000$}
    \label{fig:ex2_algos_m300}
  \end{subfigure}\hfill
  \begin{subfigure}[b]{0.32\textwidth}
    \centering
    \includegraphics[width=\linewidth]{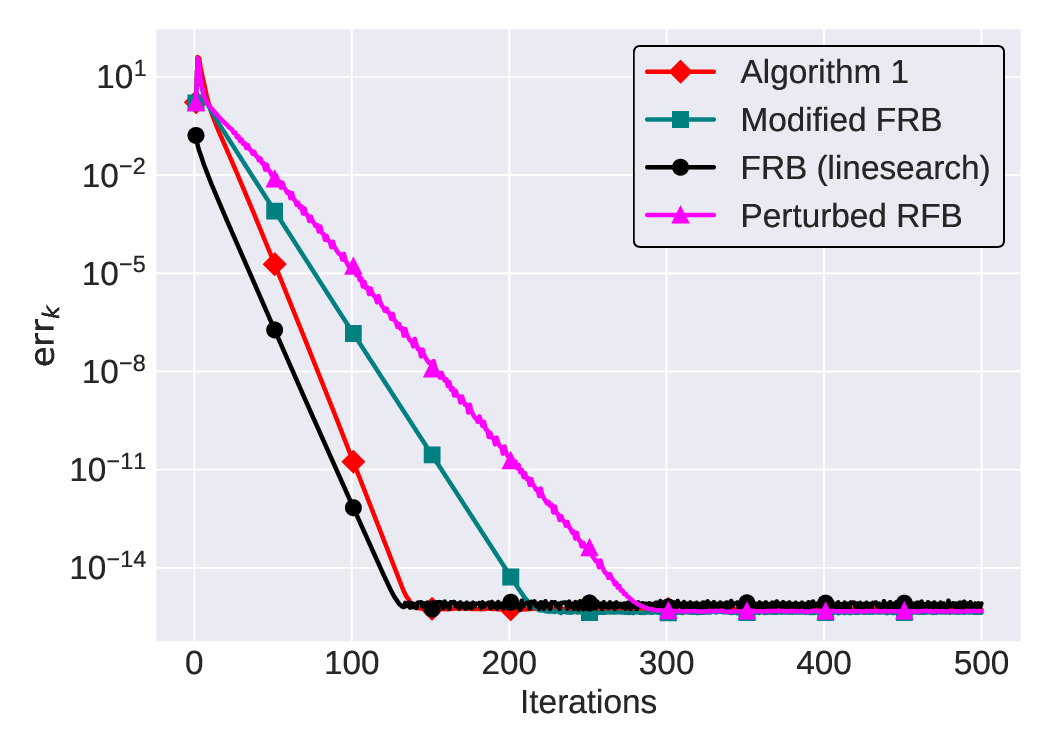}
    \caption{$m=2000$}
    \label{fig:ex2_algos_m500}
  \end{subfigure}\hfill
  \begin{subfigure}[b]{0.32\textwidth}
    \centering
    \includegraphics[width=\linewidth]{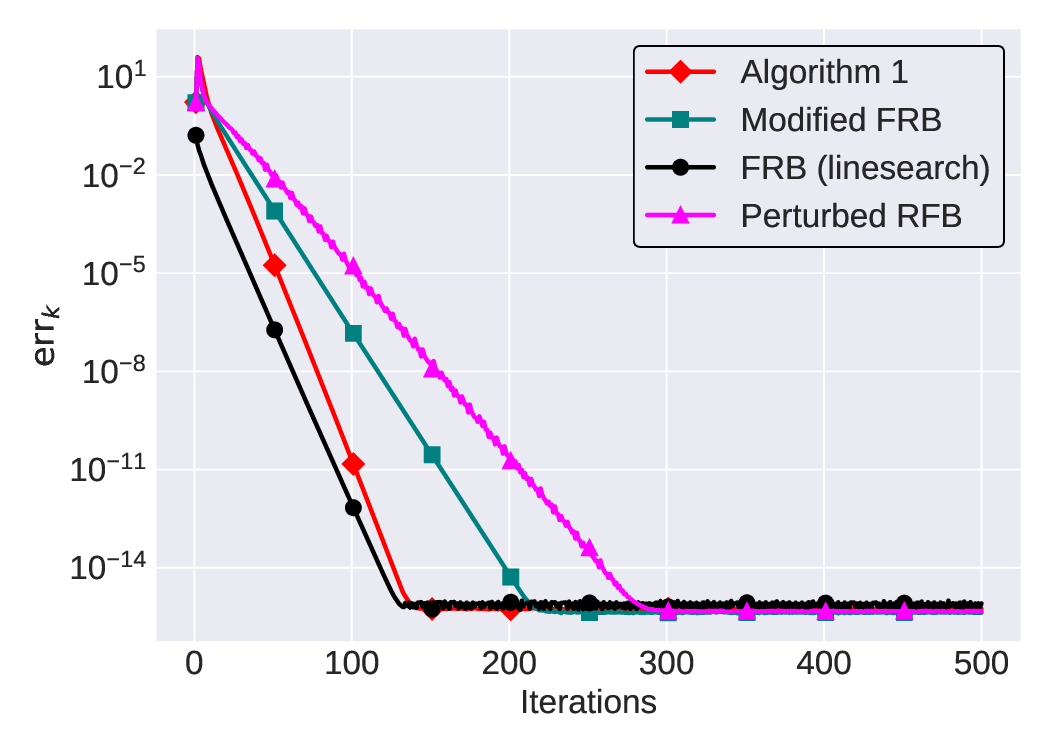}
    \caption{$m=2000$}
    \label{fig:ex2_algos_m1000}
  \end{subfigure}

\caption{Comparison of $\mathrm{err}_k$ for Example~\ref{example_2} for $m\in\{1000,5000\}$. 
The parameter choices in Algorithm~\ref{alg:forb_incr} are selected as: for (a) and (d), \((\alpha,\delta)=(0,0)\); for (b) and (e), \((\alpha,\delta)=(10^{-2},10^{-2})\); for (c) and (f) \((\alpha,\delta)=(10^{-2},10^{-3})\).}
  \label{fig:example2_delta_and_comparison}
\end{figure}
Before proving the monotonicity of $B$, notice that $v^\top S v = 0$ for all $v$ when $S$ is skew-symmetric (i.e., $S^\top=-S$). Thus, only the symmetric part of $M$ plays the role, which is 
$$
\mathrm{sym}(M) := \tfrac12(M+M^\top) = \tfrac12(G+G^\top)+D.
$$
Now, we choose $D=\tau I$, where $\tau \ge -\lambda_{\min}(\mathrm{sym}(G))+\varepsilon$ and $\varepsilon>\lambda_{\min}(\mathrm{sym}(G))$. Therefore, $\mathrm{sym}(M)\succeq 0$ and, for all $x,y$, we have
$$
\langle x-y,\,B(x)-B(y)\rangle
= \langle x-y,\,\mathrm{sym}(M)(x-y)\rangle\ge 0,
$$
hence $B$ is monotone. Furthermore, we have
$$
\|B(x)-B(y)\| = \|M(x-y)\| \le \|M\|_2\,\|x-y\|,
$$
so $B$ is $L$–Lipschitz with $L=\|M\|_2$.

Let $E=PNP^\top$ be an orthogonal decomposition with $N=\mathrm{diag}(\lambda_i)$. Then for any $\lambda>0$ and $z\in\mathbb{R}^m$, we have 
$$
J_{\lambda A}(z) \;=\; (I+\lambda(E+\beta I))^{-1}z
\;=\; P\;\mathrm{diag}\:\Big(\tfrac{1}{1+\lambda(\beta+\lambda_i)}\Big)\,P^\top z.
$$
We fix the \emph{seed} to be $10$, and draw the elements of
$R,G,\widetilde R\in\mathbb{R}^{m\times m}$ and $b\in\mathbb{R}^{m}$ from $\mathcal{N}(0,1)$. According to our requirements, we set $E=\tfrac12(R+R^\top)$ and 
$S=\tfrac12(\bar R- \bar R^\top)$. Note that $E=E^\top$ and $S^{\top}=-S$. We initialize all the algorithms with $x_0=x_1=0$ and other parameters are selected as follows.

\begin{itemize}
    \item \textbf{Algorithm \ref{alg:forb_incr}}: $\varepsilon = 10^{-12}$, $\lambda_0=0.3,\lambda_{-1} =0.1$, $c_2= \frac{0.9(1-\varepsilon-\alpha)}{2|\delta|+2}$ for given $\alpha,\delta$, and $c_1=0.9c_2$, $\gamma_k =\frac{0.1}{k^{1.001}}$.\
    \item \textbf{Modified FRB} \cite{hieu2021modified}:  $\lambda_0=0.3,\lambda_{-1} =0.1$ and $\mu=0.4$ as defined in the algorithm.
    \item \textbf{FRB (linesearch)} \cite{malitsky2020forward}: $\lambda_0=0.3,\lambda_{-1} =0.1$ and $\delta=0.9, \sigma=0.5$ used inside the loop of the linesearch.
    \item \textbf{Perturbed RFB} \cite{tan2025perturbed}:  $\lambda_0=0.3,\lambda_{-1} =0.1$, $\mu=0.19$ and $\gamma_k =\frac{0.1}{k^{1.001}}$.
\end{itemize}
All numerical results for this example are presented in Figure~\ref{fig:example2_delta_and_comparison}, with the corresponding iteration counts and CPU times reported in Tables~\ref{tab:ex2_benchmark_alpha0_delta0} and~\ref{tab:ex2_benchmark}. From Figure~\ref{fig:example2_delta_and_comparison}, we see that the FRB method with linesearch can be slightly faster in terms of iteration-wise decrease than Algorithm~\ref{alg:forb_incr}. However, Tables~\ref{tab:ex2_benchmark_alpha0_delta0} and~\ref{tab:ex2_benchmark} show that Algorithm~\ref{alg:forb_incr} achieves substantially lower CPU time than the linesearch variant and the other competing methods.

\end{example}

\begin{table}[htbp]
\centering
\caption{Iteration counts and CPU time (in seconds) to reach tolerance \(10^{-10}\) for Example~\ref{example_2} with different dimensions \(m\). For Algorithm~\ref{alg:forb_incr}, we use \(\alpha=\delta=0\).}
\label{tab:ex2_benchmark_alpha0_delta0}
\begin{tabular}{lcccccc}
\toprule
 & \multicolumn{2}{c}{\(m=200\)} & \multicolumn{2}{c}{\(m=500\)} & \multicolumn{2}{c}{\(m=700\)} \\
\cmidrule(lr){2-3}\cmidrule(lr){4-5}\cmidrule(lr){6-7}
Algorithm & Iter. & CPU (s) & Iter. & CPU (s) & Iter. & CPU (s) \\
\midrule
Alg.~1 (FoRB inc.)   & 84  & 0.019643  & 87  & 0.090670  & 90  & 0.203289 \\
Modified FRB         & 127 & 0.025333  & 131 & 0.082482  & 133 & 0.435675 \\
FRB (linesearch)     & 129 & 0.047840  & 92  & 0.091785  & 82  & 0.552501 \\
Perturbed RFB        & 165 & 0.048392  & 178 & 0.123114  & 180 & 0.719887 \\
\bottomrule
\end{tabular}
\end{table}

\begin{table}[htbp]
\centering
\caption{Iteration counts and CPU time (in seconds) to reach tolerance \(10^{-10}\) for Example~\ref{example_2} with different dimensions \(m\). For Algorithm~\ref{alg:forb_incr}, we use \(\alpha=\delta=10^{-2}\).}
\label{tab:ex2_benchmark}
\begin{tabular}{lcccccc}
\toprule
 & \multicolumn{2}{c}{\(m=1000\)} & \multicolumn{2}{c}{\(m=2000\)} & \multicolumn{2}{c}{\(m=3000\)} \\
\cmidrule(lr){2-3}\cmidrule(lr){4-5}\cmidrule(lr){6-7}
Algorithm & Iter. & CPU (s) & Iter. & CPU (s) & Iter. & CPU (s) \\
\midrule
Algorithm~\ref{alg:forb_incr}  & 93  & 0.265631  & 96  & 1.402477  & 96  & 4.032530 \\
Modified FRB         & 138 & 0.607342  & 144 & 3.093450  & 144 & 7.362841 \\
FRB (linesearch)     & 72  & 0.569167  & 119 & 4.574074  & 118 & 10.065574 \\
Perturbed RFB        & 183 & 0.844361  & 187 & 5.658181  & 184 & 12.295901 \\
\bottomrule
\end{tabular}
\end{table}

\subsection{LASSO signal recovery}
We consider the LASSO signal recovery problem
\begin{equation}\label{eq:lasso}
    \min_{x\in\mathbb{R}^{n}}~ \tfrac12\|A x -y\|_2^{\,2}+\lambda\|x\|_1,
\end{equation}
where $A\in\mathbb{R}^{m\times n}$, $y\in\mathbb{R}^{m}$ is the observation vector, and $\lambda>0$ is the regularization parameter. The $\ell_1$-term promotes sparsity of the recovered signal. Identifying \eqref{eq:lasso} with the monotone inclusion
model~\eqref{main}, we set $A=\partial(\lambda\|\cdot\|_1)$ and $B(x)=A^\top(Ax-y)$, where $\partial(\cdot)$ denotes the
subdifferential.
\begin{figure}[htbp]
  \centering
  \begin{subfigure}[b]{0.45\textwidth}
    \centering
    \includegraphics[width=\linewidth]{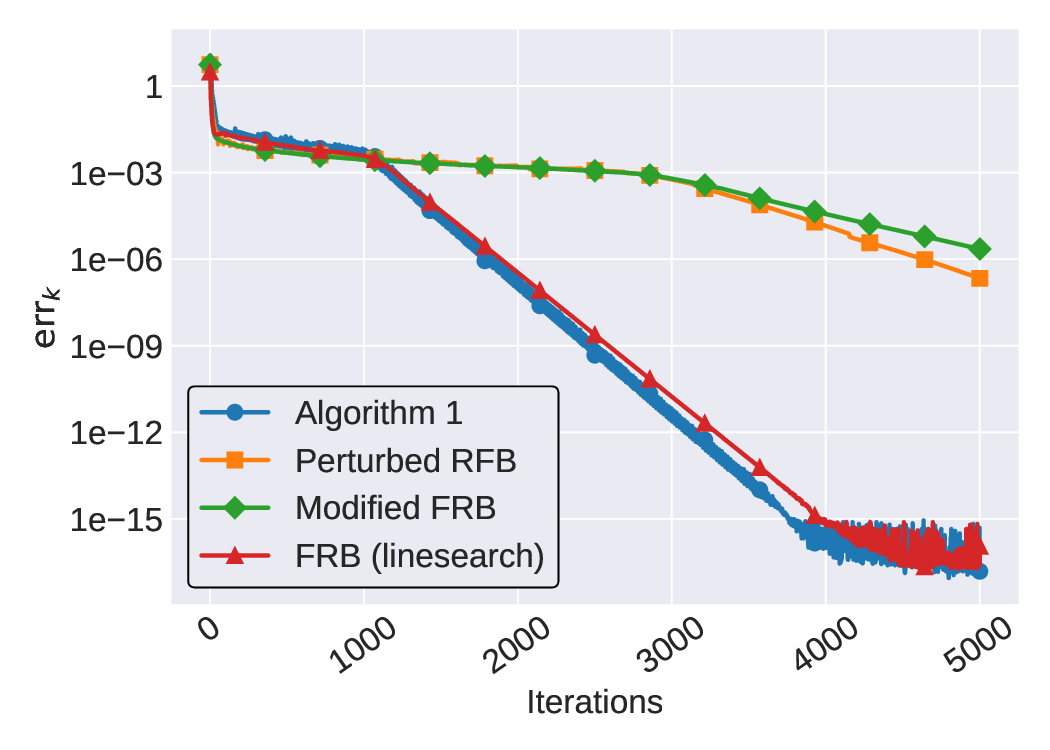}
    \caption{Iterate gap.}
  \end{subfigure}\hfill
  \begin{subfigure}[b]{0.45\textwidth}
    \centering
    \includegraphics[width=\linewidth]{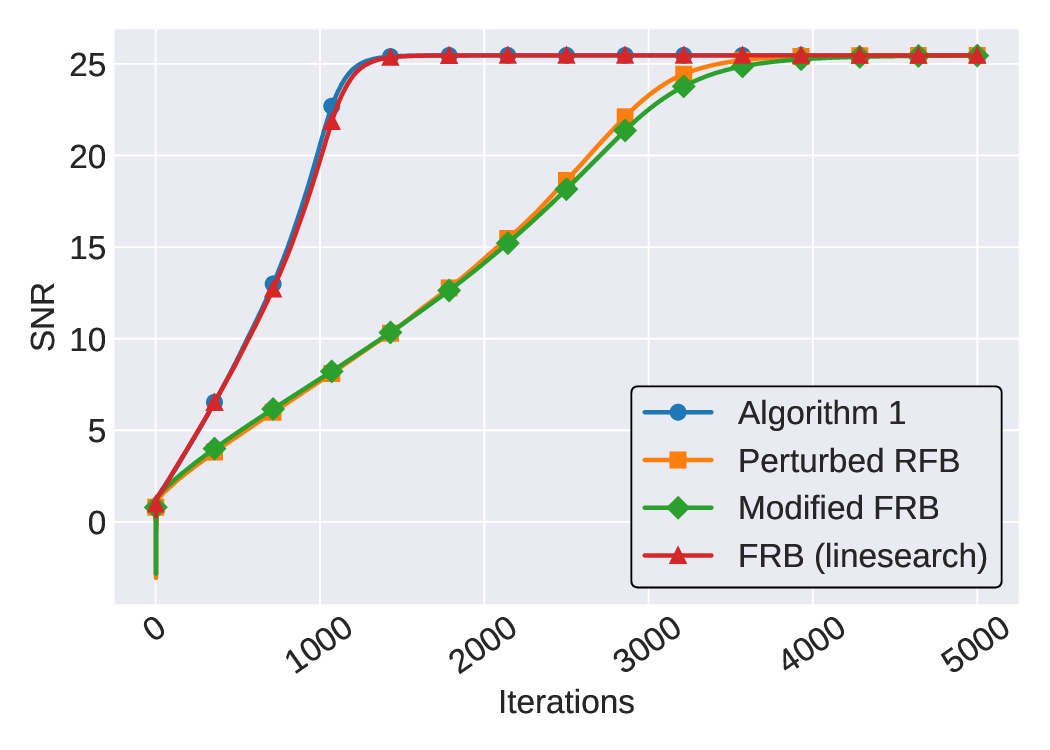}
    \caption{SNR versus iteration.}
  \end{subfigure}
  \caption{Convergence plots for LASSO signal recovery: (a) iterate gap versus iterations, and (b) SNR recovery over iterations.}
  \label{fig:conv_gap_snr}
\end{figure}
In all tests we fix $m=256$, $n=1024$, and the sparsity parameter $d=60$. The entries $(A_{ij})$ of the matrix $A$ are generated from $\mathcal{N}(0,1/m)$. The elements of the ground-truth sparse signal $x^*$ is first sampled from
$\mathcal{N}(0,1)$, after which we choose $d$ indices uniformly at random to be nonzero and set the remaining components to zero. The noisy measurements are produced according to
\[
y = Ax^* + \varepsilon,\qquad \varepsilon\sim\mathcal{N}(0,\sigma^2 I_m),\ \sigma=0.01.
\]
After a moderate grid search, we set $\lambda=0.01$. Our objective is to recover $x^*$ by applying all algorithms to this instance. To monitor reconstruction quality, at iteration $k$ we report the signal-to-noise ratio (SNR),
\[
\mathrm{SNR}_k \;=\; 20\log_{10}\!\left(\frac{\|x^*\|_2}{\|x_k-x^*\|_2}\right),
\]
where $x_k$ denotes the estimate produced at the $k$-th iteration. Figures~\ref{fig:true_vs_rec_all} and~\ref{fig:coef_rec_all}
show that all methods nearly recover the nonzero components of $x^*$. Moreover,
Figure~\ref{fig:alg1_coeffs_6panels} displays the reconstructions obtained by Algorithm~\ref{alg:forb_incr} for several choices of
$(\alpha,\delta)$; in each case the recovery remains accurate, indicating robustness with respect to these parameters. Finally, as
seen in Figure~\ref{fig:conv_gap_snr}, Algorithm~\ref{alg:forb_incr} reaches high-quality reconstructions more rapidly than the
other methods, both in terms of the iterate gap and the SNR evolution.
\begin{figure}[htbp]
  \centering
  \begin{subfigure}[b]{0.4\textwidth}
    \centering
    \includegraphics[width=\linewidth]{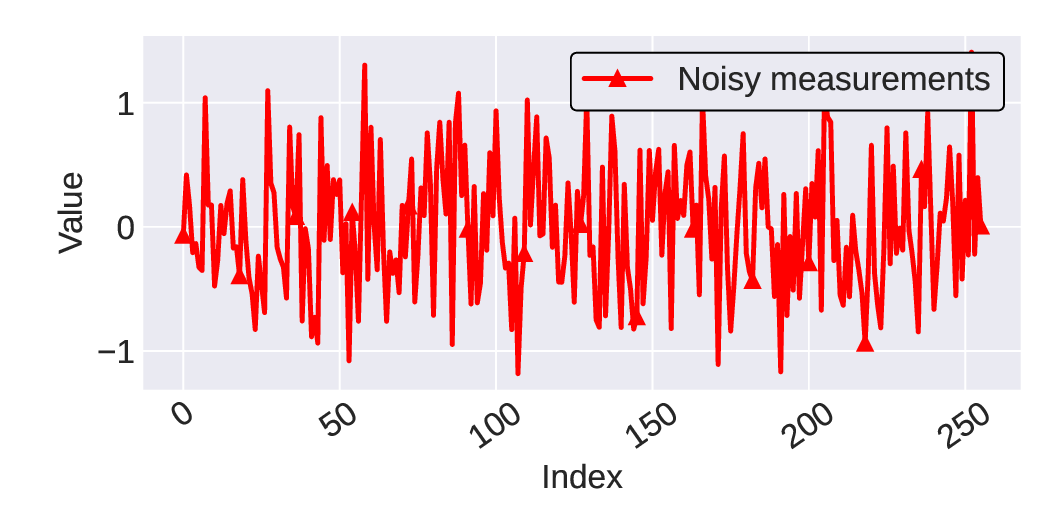}
    \caption{Noisy Signal.}
  \end{subfigure}\hfill
  \begin{subfigure}[b]{0.4\textwidth}
    \centering
    \includegraphics[width=\linewidth]{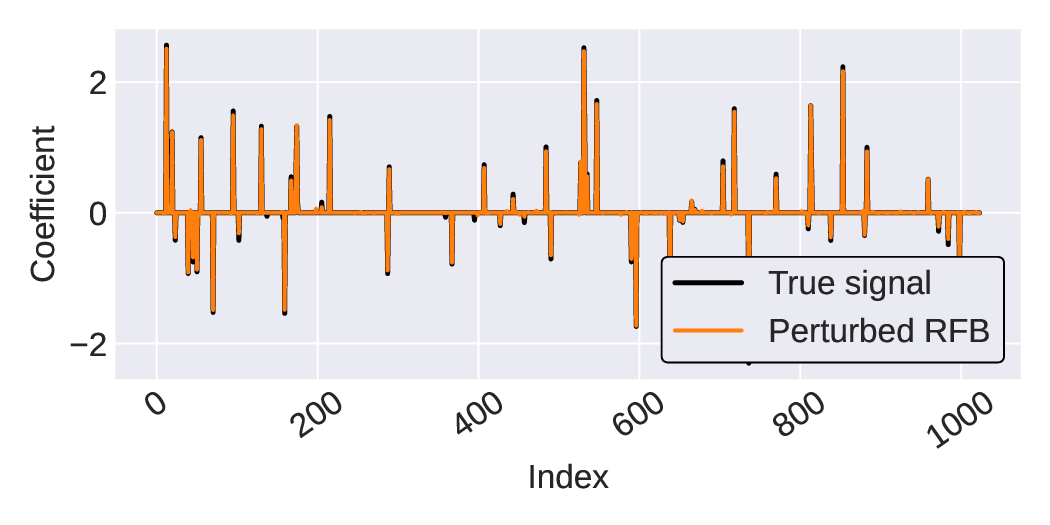}
    \caption{Perturbed RFB}
  \end{subfigure}

  \vspace{0.8em}

  \begin{subfigure}[b]{0.46\textwidth}
    \centering
    \includegraphics[width=\linewidth]{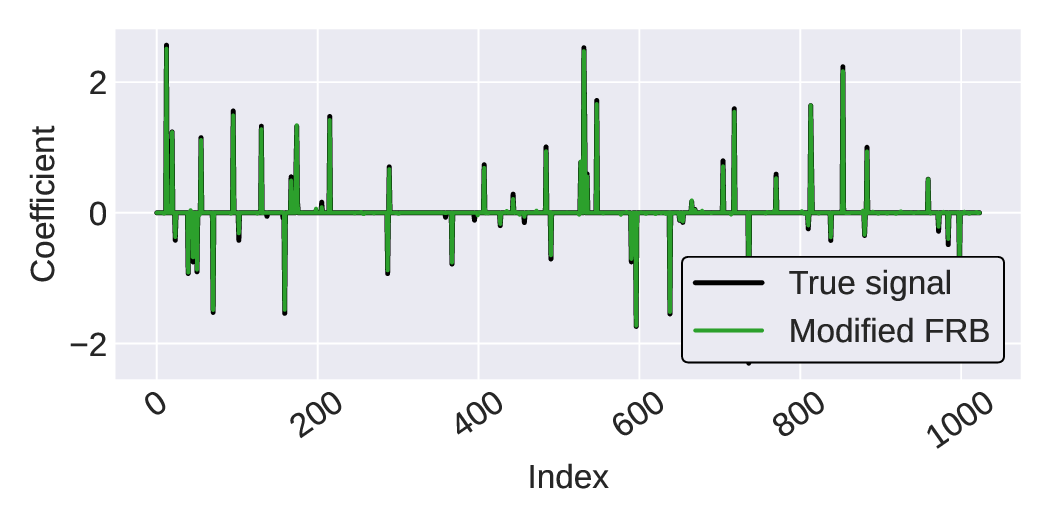}
    \caption{Modified FRB}
  \end{subfigure}\hfill
  \begin{subfigure}[b]{0.46\textwidth}
    \centering
    \includegraphics[width=\linewidth]{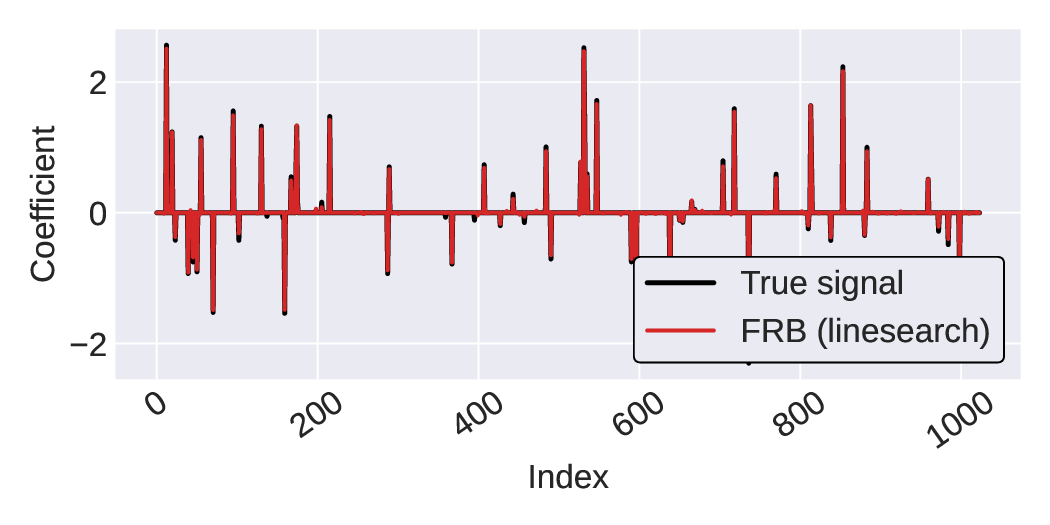}
    \caption{FRB (linesearch)}
  \end{subfigure}

  \caption{Comparison of the true sparse signal $x^*$ reconstructed by different algorithms.}
  \label{fig:true_vs_rec_all}
\end{figure}

\begin{figure}[htbp]
  \centering

  \begin{subfigure}[b]{0.4\textwidth}
    \centering
    \includegraphics[width=\linewidth]{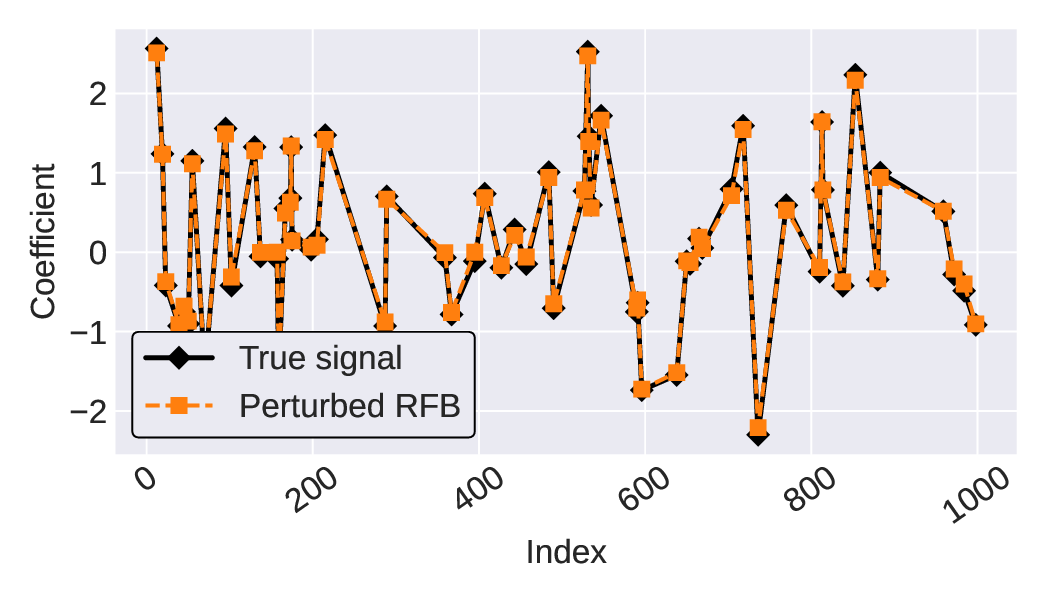}
    \caption{Perturbed RFB.}
  \end{subfigure}\hfill
  \begin{subfigure}[b]{0.4\textwidth}
    \centering
    \includegraphics[width=\linewidth]{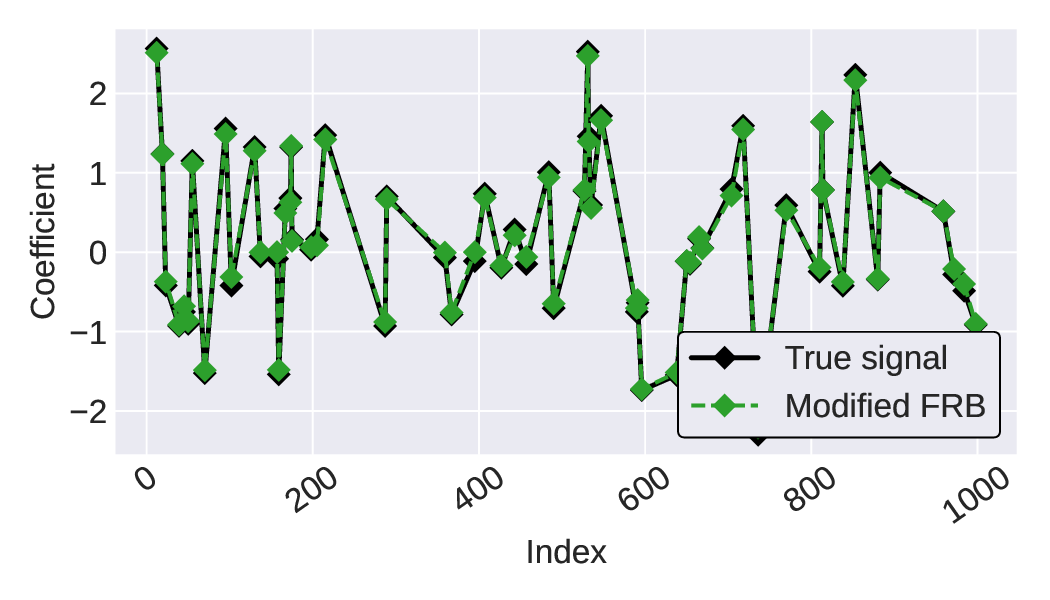}
    \caption{Modified FRB.}
  \end{subfigure}

  \vspace{0.8em}

  \begin{subfigure}[b]{0.4\textwidth}
    \centering
    \includegraphics[width=\linewidth]{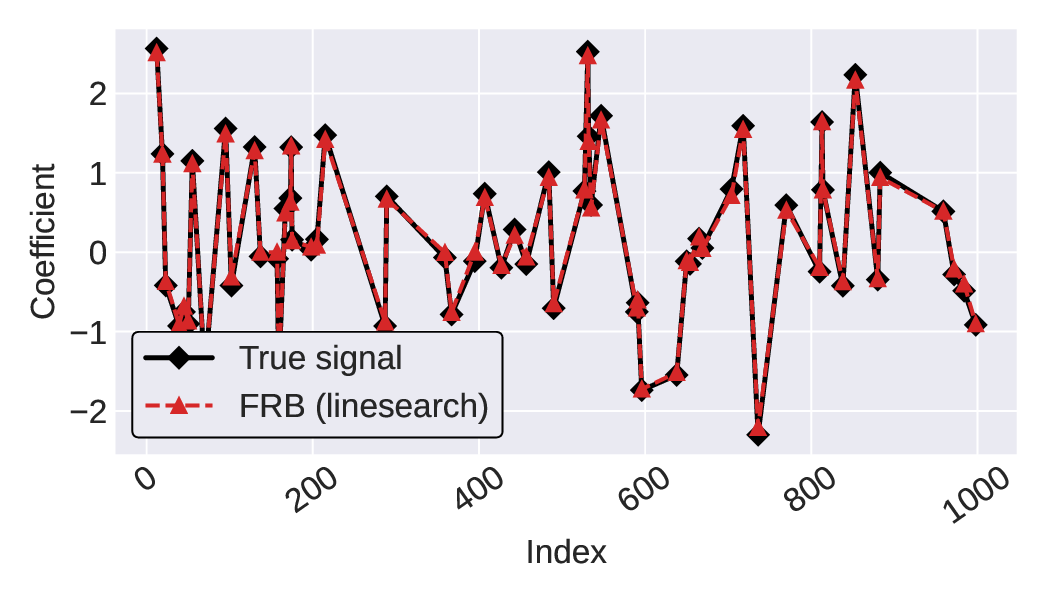}
    \caption{FRB (linesearch).}
  \end{subfigure}

  \caption{Coefficient recovery of the true signal $x^*$ by various algorithms.}
  \label{fig:coef_rec_all}
\end{figure}


\begin{figure}[htbp]
  \centering

  \begin{subfigure}[b]{0.32\textwidth}
    \centering
    \includegraphics[width=\linewidth]{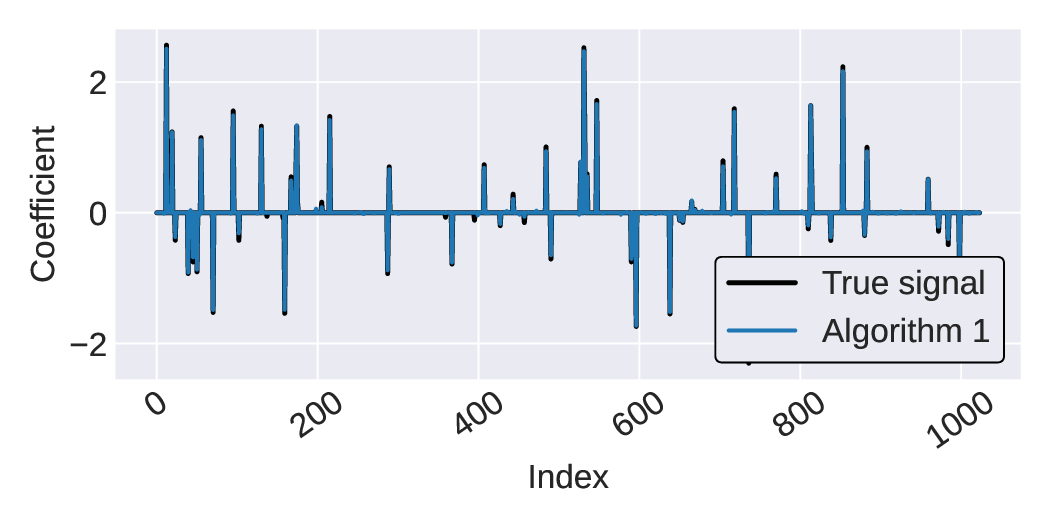}
    \caption{$(\alpha,\delta)=(0,0)$}
  \end{subfigure}\hfill
  \begin{subfigure}[b]{0.32\textwidth}
    \centering
    \includegraphics[width=\linewidth]{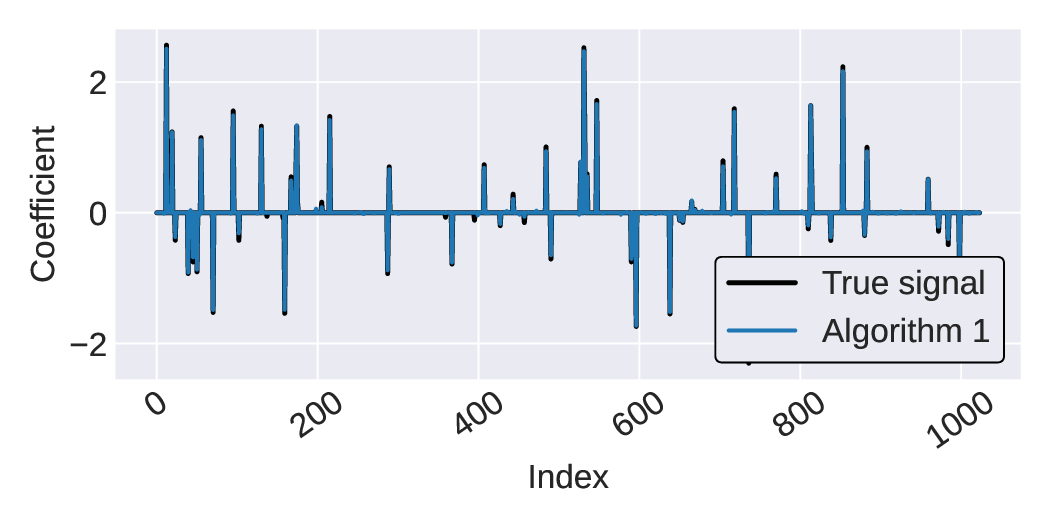}
    \caption{$(\alpha,\delta)=(10^{-2},10^{-3})$}
  \end{subfigure}\hfill
  \begin{subfigure}[b]{0.32\textwidth}
    \centering
    \includegraphics[width=\linewidth]{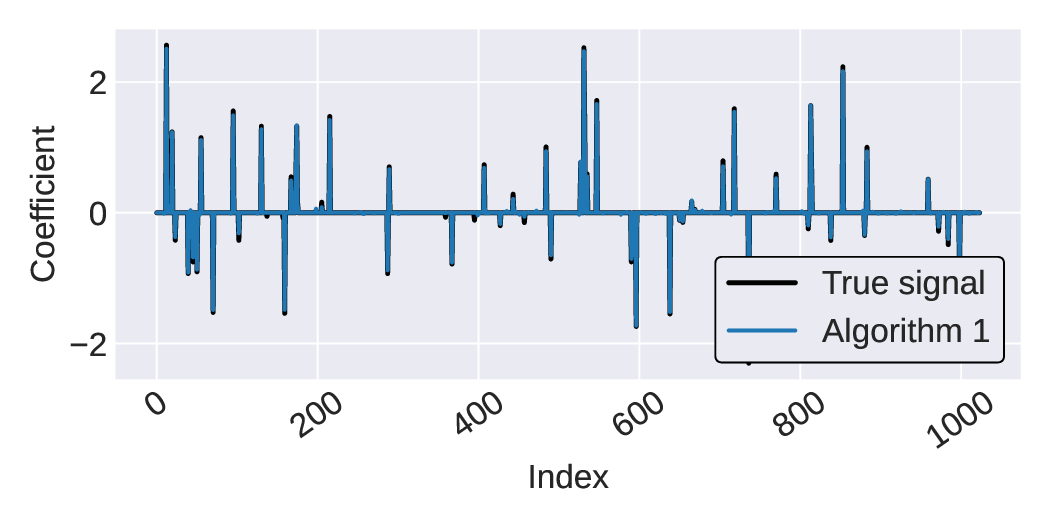}
    \caption{$(\alpha,\delta)=(10^{-2},10^{-2})$}
  \end{subfigure}

  \vspace{0.8em}

  \begin{subfigure}[b]{0.32\textwidth}
    \centering
    \includegraphics[width=\linewidth]{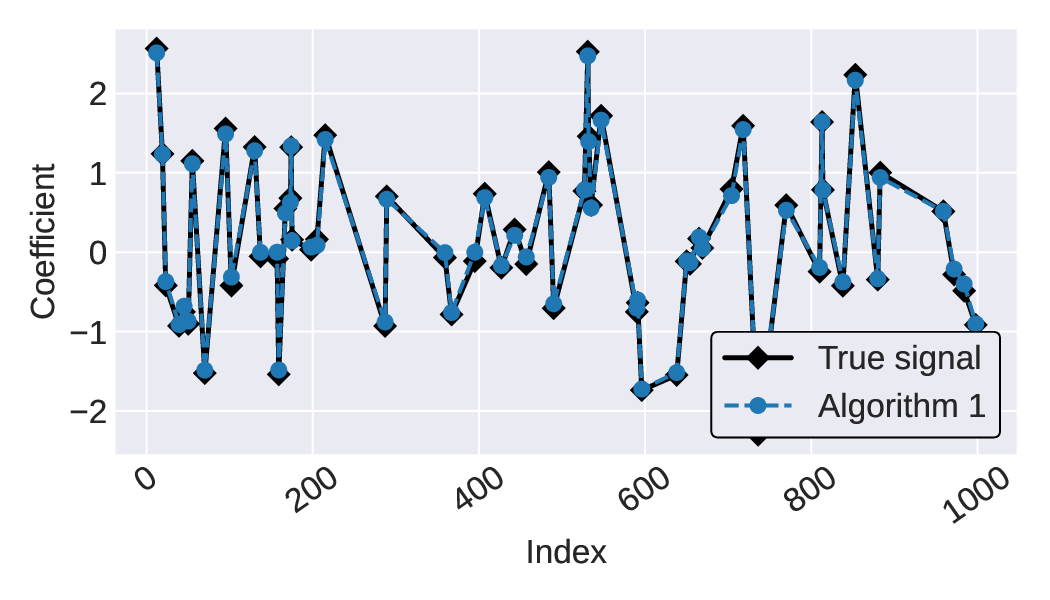}
    \caption{Algorithm \ref{alg:forb_incr}}
  \end{subfigure}\hfill
  \begin{subfigure}[b]{0.32\textwidth}
    \centering
    \includegraphics[width=\linewidth]{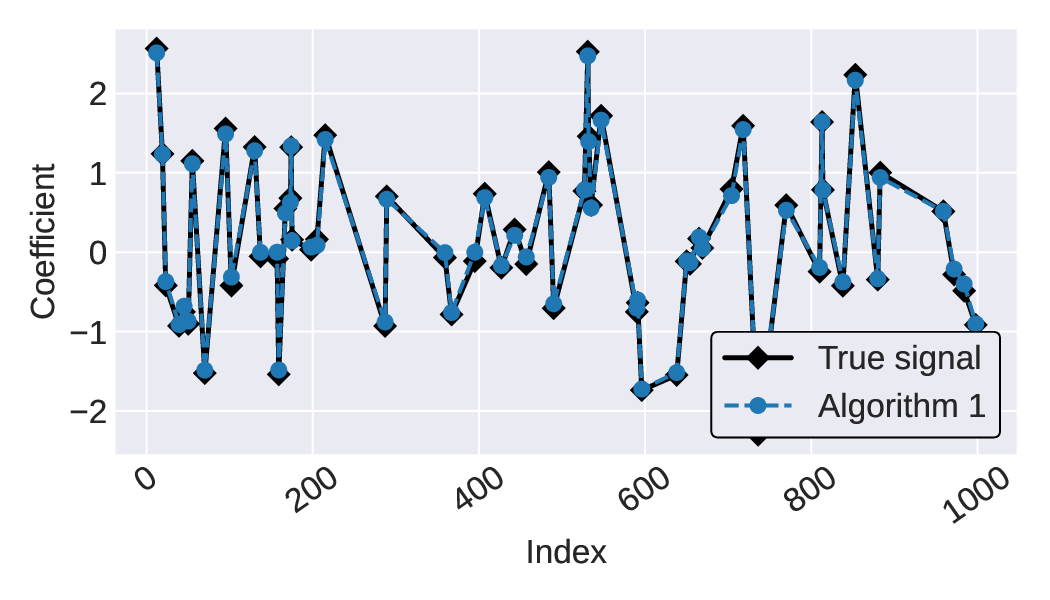}
    \caption{Algorithm \ref{alg:forb_incr}}
  \end{subfigure}\hfill
  \begin{subfigure}[b]{0.32\textwidth}
    \centering
    \includegraphics[width=\linewidth]{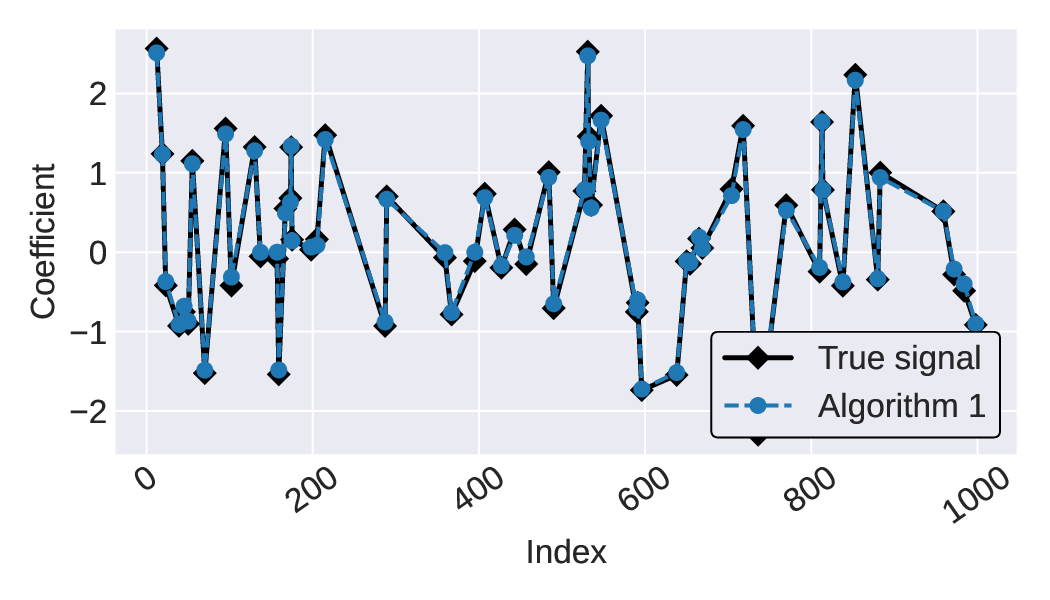}
    \caption{Algorithm \ref{alg:forb_incr}}
  \end{subfigure}

  \caption{Effect of $(\alpha,\delta)$ in Algorithm \ref{alg:forb_incr} on the LASSO signal recovery (top row) problem and the corresponding coefficient recovery on the true signal (bottom row).}
  \label{fig:alg1_coeffs_6panels}
\end{figure}

\section{Conclusion}
We studied the monotone inclusion \eqref{main} via the generalized forward–reflected–backward (GFRB) method, an extension of FRB, under a non-decreasing step–size rule, and established its convergence properties. On a particular example, we showed that the theoretical linear rate for GFRB  is bounded below by $1/\sqrt{2}$, and on another example, we further demonstrated that a suitable choice of the initial parameters yields a strictly improved linear rate. In addition, we proposed an extended primal–dual twice–reflected algorithm (EPDTR), which generalizes PDTR to a class of structured inclusion problems that includes \eqref{main}. Our current analysis of EPDTR requires prior knowledge of the Lipschitz constant and the norm of the underlying linear operator. A natural direction for future work is to remove this dependence via linesearch or other adaptive methods, for instance, along the lines of~\cite{soe2025golden}, which by construction, would yield a parameter-free variant of PDTR.

\section*{Acknowledgements}
Santanu Soe gratefully acknowledges Prof. Matthew K. Tam (The University of Melbourne) for his guidance and unwavering support. The research of Santanu Soe was supported by the Prime Minister’s Research Fellowship program (Project number SB23242132MAPMRF005015), Ministry of Education, Government of India, and the Melbourne Research Scholarship.
\section*{Data Availability}
No datasets were generated or analyzed during the course of this study. The Python codes used in this paper are available from the corresponding author upon reasonable request.

\section*{Conflict of Interest}
The authors declare that there are no conflicts of interest in this paper.

\bibliography{My_bib}
\bibliographystyle{abbrv}

\end{document}